\newtheorem{lemma}{Lemma}
\newtheorem{proposition}{Proposition}
\newtheorem{theorem}{Theorem}
\newtheorem{corollary}{Corollary}
\newtheorem{remark}{Remark}
\newcommand{\qq}{q}
\newcommand{\cc}{c}
\newcommand{\ff}{f}
\newcommand{\KK}{K} 
\newcommand{\BB}{B}
\newcommand{\HH}{H}
\newcommand{\GG}{G}
\newcommand{\RR}{R}
\title{Subexponential upper bound on the number of rich words}
\author{Josef Rukavicka}
\date{November 14, 2025}
\begin{document}

\maketitle

\begin{abstract}
Let \(\RR(n)\) denote the number of rich words of length \(n\) over a given finite alphabet. 
In 2017 it was proved that \(\lim_{n\rightarrow\infty}\sqrt[n]{\RR(n)}=1\); it means the number of rich words has a subexponential growth. However, up to now, no subexponential upper bound on \(\RR(n)\) has been presented. The current paper fills this gap.

Let \(\frac{1}{2}<\lambda<1\) and \(\gamma>1\) be real constants, let \(\qq\) be the size of the alphabet, and let \(\phi\) be a positive function with \(\lim_{n\rightarrow\infty}\phi(n)=\infty\) and \(\lim_{n\rightarrow\infty}\frac{n}{\phi(n)}=\infty\). Let \(\ln^*(x)\) denote the iterated logarithm of \(x>0\).
We prove that there are \(n_0\) and \(\cc>0\) such that if \(n>n_0\), \[\ff(n)=\sqrt[\gamma]{\cc\ln^*{(\frac{n}{\phi(n)}}\ln{\qq})}\quad\mbox{ and }\quad\BB(n)=\qq^{\frac{n}{\phi(n)}+\frac{n}{(2\lambda)^{\ff(n)-1}}}\mbox{}\] then 
 \(\lim_{n\rightarrow\infty}\sqrt[n]{\BB(n)}=1\)  and 
 \(\RR(n)\leq\BB(n)\).
\end{abstract}

\section{Introduction}
Let \(w=w_1w_2\dots w_n\) be a finite word of length \(n\), where \(w_i\) are letters and \(i\in\{1,2,\dots,n\}\). Let \(w^R=w_nw_{n-1}\cdots w_1\) denote the reversal of \(w\). We say that \(w\) is a palindrome if \(w=w^R\); for example ``noon'' are ``level'' are palindromes.

A finite word \(w\) is called \emph{rich} if \(w\) contains \(\vert w\vert\) distinct non-empty palindromic factors.

Let \(\RR_{\qq}(n)\) denote the number of rich words of length \(n\) over a finite alphabet with \(\qq\) letters. 

In \cite{GuShSh15} it was conjectured, for rich words on the binary alphabet, that for some infinitely growing function $g(n)$ the following holds true:  \[{\RR_2(n)} = \mathcal{O} \Bigl(\frac{n}{g(n)}\Bigr)^{\sqrt{n}}\mbox{.}\] 

In \cite{RukavickaRichWords2017} it was shown that the number of rich words over any finite alphabet grows subexponentially with \(n\); formally  it was shown that \[\lim_{n\rightarrow\infty}\sqrt[n]{\RR_{\qq}(n)}=1\mbox{, }\] where \(\qq\) is a positive integer.

A palindromic length of finite words has been introduced in \cite{FrPuZa}: the palindromic length of a finite word $w$ is equal to the minimal number of palindromes whose concatenation is equal to $w$. 

To prove the main result of \cite{RukavickaRichWords2017}, the author showed an upper bound on the palindromic length of rich words. This upper bound on the palindromic length of rich words has been improved in \cite{RUKAVICKA202295}.

An upper bound on the factor and palindromic complexity of rich words has been derived in \cite{RukavickaFacCmplx2021}.
A joining of rich words into a longer rich word has been researched in \cite{10.1007/978-3-319-66396-8_7} and \cite{10.1007/978-3-030-28796-2_23}. Squares and overlapping factors of rich words have been investigated in \cite{PELANTOVA20132432} and \cite{VESTI201748}.

Some other related results about rich words can be found, for instance, in \cite{AGO2021184}, \cite{BuLuGlZa2}, \cite{GlJuWiZa}, \cite{RUKAVICKA2021richext}, \cite{Vesti2014}.

Although it was shown in \cite{RukavickaRichWords2017} that the number of rich words \(\RR_{\qq}(n)\) grows subexponentially with \(n\), no subexponential upper bound was presented up to now. 
The current article presents two subexponential upper bounds on the number of rich words.
The first one, see Theorem \ref{hhdbh7638ju} is a recursive definition and is using the function \(\max\); as such this upper bound is not too convenient.
The second upper bound, see Theorem \ref{bbc8d783hjfd}, is without a recursive definition and without the function \(\max\). To get rid of the recursive definition, an iterated logarithm was used applying some rough approximations, see Lemma \ref{uud983bdj}. This second upper bound forms the main result of the article and is presented also in the abstract.

\section{Preliminaries}
Let \(\mathbb{N}_0\) denote the set of non-negative integers, let \(\mathbb{N}_1\) denote the set of all positive integers, let \(\mathbb{R}\) denote the set of all real numbers, and let \(\mathbb{R}_1=\{x\in\mathbb{R}\mid x\geq 1\}\).

Let \[\Delta=\{\phi:\mathbb{R}_1\rightarrow\mathbb{R}_1\mid \lim_{x\rightarrow\infty}\phi(x)=\infty\}\mbox{.}\]
Let \[\Phi=\{\phi\in\Delta\mid \lim_{x\rightarrow\infty}\frac{x}{\phi(x)}=\infty\}\mbox{.}\]

\section{Known results}
The main result of \cite{RukavickaRichWords2017} says that the number of rich words has a subexponential growth:
\begin{theorem}(Theorem \(10\) from \cite{RukavickaRichWords2017})
\label{bn9d89825644}
    We have that \(\lim_{n\rightarrow\infty}\sqrt[n]{\RR_{\qq}(n)}=1\).
\end{theorem}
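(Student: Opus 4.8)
The plan is to reduce Theorem~\ref{bn9d89825644} to a subexponential growth estimate and then prove that estimate through a self-referential encoding of rich words. Writing $g(n)=\log_{\qq}\RR_{\qq}(n)$, we have $\sqrt[n]{\RR_{\qq}(n)}=\qq^{g(n)/n}$, and since $\RR_{\qq}(n)\ge 1$ the root is always at least $1$; hence the statement is equivalent to $g(n)=o(n)$. I would build the argument on two structural properties of rich words: every factor, and in particular every prefix, of a rich word is again rich; and every palindrome is completely determined by its first half. The first is the standard hereditary property of richness, the second is immediate from the definition of a palindrome.

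The key step would be a recursive upper bound obtained by decomposing a rich word into palindromes. The crucial input is a sublinear bound $L(n)=o(n/\log n)$ on the palindromic length of rich words; establishing such a bound is the substantial combinatorial content, and it is exactly the tool used in \cite{RukavickaRichWords2017}. Given a rich word $w$ of length $n$, write $w=u_1u_2\cdots u_k$ as a concatenation of at most $L(n)$ palindromes. To recover $w$ it suffices to record the block lengths $(\ell_1,\dots,\ell_k)$, a composition of $n$, together with each palindrome $u_i$; and $u_i$ is determined by its first half, a word of length $\lceil \ell_i/2\rceil$ which, being a prefix of the rich word $u_i$, is itself rich. Counting descriptions therefore yields
\[
\RR_{\qq}(n)\le\sum_{k\le L(n)}\binom{n-1}{k-1}\max_{\ell_1+\cdots+\ell_k=n}\prod_{i=1}^{k}\RR_{\qq}(\lceil \ell_i/2\rceil)\mbox{,}
\]
because every rich word admits at least one such description. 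Taking logarithms turns this into $g(n)\le A(n)+\max\sum_{i}g(\lceil \ell_i/2\rceil)$, where the composition-counting overhead $A(n)=O(L(n)\log n)$ is $o(n)$ precisely because $L(n)=o(n/\log n)$.

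It then remains to unroll the recursion, and here the halving built into ``first half of a palindrome'' does the decisive work. Splitting the blocks into short ones, of length at most a slowly growing threshold $\tau(n)$, and long ones, the short blocks contribute at most $O(L(n)\tau(n))=o(n)$ because $g(m)\le m$, while the long blocks contribute at most $\bigl(\tfrac12+o(1)\bigr)n\cdot\sup_{m\ge \tau(n)/2}\tfrac{g(m)}{m}$, the factor $\tfrac12$ arising from the halved lengths. Passing to the limit, if $\ell^{*}=\limsup_{n}g(n)/n$, this inequality forces $\ell^{*}\le\tfrac12\ell^{*}$, whence $\ell^{*}=0$ and $g(n)=o(n)$, as required. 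I expect the main obstacle to lie entirely in the first input, namely proving the sublinear palindromic-length bound for rich words, rather than in the recursion itself, which is a genuine contraction once the halving is in place. I also expect that sharpening the qualitative conclusion $g(n)=o(n)$ into a clean closed-form bound of the shape quoted in the abstract is where the delicate iterated-logarithm bookkeeping becomes unavoidable, since one must then track how the $o(1)$ terms and the contraction factor accumulate over the $\Theta(\log n)$ levels of the recursion.
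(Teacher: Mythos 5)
The paper does not actually prove this statement: it is quoted verbatim as Theorem 10 of \cite{RukavickaRichWords2017}, and the only machinery imported from that proof is Proposition \ref{iud9bn56s42}. Your reconstruction is essentially the right strategy, and it matches both the cited proof and the way the present paper reuses it: your palindromic decomposition with each palindrome recovered from its (rich) first half is exactly the source of the exponent $\frac{n+\kappa_n}{2}$ in Proposition \ref{iud9bn56s42}; your composition-counting overhead is the factor $(\frac{en}{\kappa_n})^{\kappa_n}$ there; and your contraction argument ($\ell^*\leq\frac12\ell^*$ after unrolling) is the same fixed-point idea as the paper's iteration $\alpha_j=(2\lambda)^{j-1}$ in Proposition \ref{nhdj7738dv}. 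So the approach is not genuinely different.

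Two caveats. First, you explicitly black-box the sublinear palindromic-length bound for rich words, and that is the entire combinatorial substance of the theorem; as written your argument proves nothing until that lemma is supplied, and it is by far the hardest part of \cite{RukavickaRichWords2017}. Second, there is a small quantitative slip: the bound actually established there is $L(n)=O(n/\ln n)$ (note $\kappa_n=\lceil c_1\frac{n}{\ln n}\rceil$), not $o(n/\ln n)$ as you require. With your crude estimate $A(n)=O(L(n)\log n)$ this only gives $A(n)=O(n)$, which destroys the contraction. The fix is to use $\log\binom{n-1}{k-1}\leq k\log\frac{en}{k}$, so that for $k=\Theta(n/\ln n)$ the overhead is $O\bigl(\frac{n}{\ln n}\ln\ln n\bigr)=o(n)$ --- which is precisely what the factor $(\frac{en}{\kappa_n})^{\kappa_n}$ in Proposition \ref{iud9bn56s42}, rewritten as $(c_2\ln n)^{\sigma_n}$ in Corollary \ref{jjduf983nhdh}, encodes.
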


There is an obvious corollary of Theorem \ref{bn9d89825644}.
\begin{corollary}
\label{d788354dn}
    If \(\alpha\in\mathbb{R}_1\), and \(\qq\in\mathbb{N}_1\) then there is \(\HH\in\mathbb{R}_1\) such that \(\RR_{\qq}(n)\leq \HH\qq^{\frac{n}{\alpha}}\) for all \(n\in\mathbb{N}_1\).
\end{corollary}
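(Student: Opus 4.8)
The plan is to unwind the definition of the limit appearing in Theorem \ref{bn9d89825644} and then push the finitely many initial terms into the constant \(\HH\). Writing \(\beta=\qq^{\frac{1}{\alpha}}\), the claimed inequality becomes \(\RR_{\qq}(\nn)\leq\HH\beta^{\nn}\), and since \(\qq\geq 1\) and \(\alpha\geq 1\) we have \(\beta\geq 1\). The whole argument amounts to comparing the base \(1\) of the subexponential growth rate against the strictly larger base \(\beta\).

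First I would dispose of the degenerate case \(\qq=1\): over a one-letter alphabet the only word of length \(\nn\) is \(a^{\nn}\), whose factors \(a,a^2,\dots,a^{\nn}\) are \(\nn\) distinct palindromes, so every such word is rich and \(\RR_{1}(\nn)=1\); taking \(\HH=1\) settles this case (here \(\beta=1\)). For \(\qq\geq 2\) we have \(\beta>1\), and this strict inequality is exactly what the subexponential growth supplies. Since \(\lim_{\nn\to\infty}\sqrt[\nn]{\RR_{\qq}(\nn)}=1<\beta\), I would fix a real \(\varepsilon>0\) with \(1+\varepsilon<\beta\); by the definition of the limit there is an \(N\) such that \(\sqrt[\nn]{\RR_{\qq}(\nn)}<1+\varepsilon\), and hence \(\RR_{\qq}(\nn)<(1+\varepsilon)^{\nn}<\beta^{\nn}=\qq^{\frac{\nn}{\alpha}}\), for every \(\nn>N\).

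It remains to handle the finitely many indices \(\nn\in\{1,2,\dots,N\}\). For each of these the quantity \(\RR_{\qq}(\nn)/\beta^{\nn}\) is a finite real number, so setting \[\HH=\max\Bigl\{1,\ \max_{1\leq \nn\leq N}\frac{\RR_{\qq}(\nn)}{\beta^{\nn}}\Bigr\}\] yields \(\HH\in\mathbb{R}_1\) together with \(\RR_{\qq}(\nn)\leq\HH\beta^{\nn}=\HH\qq^{\frac{\nn}{\alpha}}\) for all \(\nn\geq 1\): the tail \(\nn>N\) is covered by the entry \(1\) in the maximum, and the head \(\nn\leq N\) by its own ratio. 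I do not expect a genuine obstacle here — the result is labelled ``obvious'' precisely because the only points requiring a little care are excluding the unary alphabet, where \(\beta=1\) would break the strict comparison, and enforcing \(\HH\geq 1\) so that \(\HH\) indeed lands in \(\mathbb{R}_1\).
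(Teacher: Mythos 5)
Your proof is correct and follows essentially the same route as the paper: use the subexponential limit from Theorem \ref{bn9d89825644} to get eventual domination by \(\qq^{\nn/\alpha}\), and absorb the finitely many remaining indices into \(\HH\) via a maximum. If anything, your version is the more careful one --- you isolate the case \(\qq=1\) (where \(\qq^{1/\alpha}=1\) and the strict comparison would fail) and spell out the \(\varepsilon\)-argument, whereas the paper passes from \(\lim \sqrt[\nn]{\RR_{\qq}(\nn)}/\sqrt[\nn]{\qq^{\nn/\alpha}}\leq 1\) directly to \(\lim \RR_{\qq}(\nn)/\qq^{\nn/\alpha}\leq 1\), a step that is not valid for arbitrary sequences and implicitly relies on exactly the strict inequality \(1+\varepsilon<\qq^{1/\alpha}\) that you make explicit.
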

\begin{proof}
    Realize that \(\lim_{n\rightarrow\infty}\sqrt[n]{\qq^{\frac{n}{\alpha}}}=\qq^{\frac{1}{\alpha}}\geq 1\). Then Theorem \ref{bn9d89825644} implies that \[\lim_{n\rightarrow\infty}\frac{\sqrt[n]{\RR_{\qq}(n)}}{\sqrt[n]{\qq^{\frac{n}{\alpha}}}}\leq  1\mbox{}\]
    and consequently 
    \[\lim_{n\rightarrow\infty}\frac{\RR_{\qq}(n)}{\qq^{\frac{n}{\alpha}}}\leq 1\mbox{.}\] The corollary follows. This ends the proof.
\end{proof}

To prove Theorem \ref{bn9d89825644} the following proposition from \cite{RukavickaRichWords2017} has been applied. 
\begin{proposition}(restating of Proposition \(9\) from \cite{RukavickaRichWords2017})
\label{iud9bn56s42}
Suppose \(q\in\mathbb{N}_1\). There is a constant \(c_1\in\mathbb{R}\) such that:
    If \(\kappa_n=\lceil c_1\frac{n}{\ln{n}}\rceil\), \(h> 1\), \(K\geq 1\), and \(\RR_{\qq}(n)\leq Kh^n\) for all \(n\in\mathbb{N}_1\) then \[\RR_{\qq}(n)\leq K^{\kappa_n}h^{\frac{n+\kappa_n}{2}}(\frac{en}{\kappa_n})^{\kappa_n}\mbox{ for all }n\geq 2\mbox{.}\]
\end{proposition}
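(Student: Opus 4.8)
The plan is to turn the hypothesis \(\RR_{\qq}(n)\le Kh^n\) into the stated inequality by exploiting two structural features of rich words: that such a word decomposes into few palindromes, and that a palindrome is determined by its first half, which is again a rich word. The second feature is the source of the square-root saving \(h^{\frac{n+\kappa_n}{2}}\), and the first keeps the combinatorial cost of describing the decomposition subexponential. I take \(c_1\) to be the constant coming from the known upper bound on the palindromic length of rich words, so that every rich word \(w\) of length \(n\ge 2\) can be written as a concatenation \(w=p_1p_2\cdots p_m\) of \(m\le\kappa_n=\lceil c_1\frac{n}{\ln n}\rceil\) palindromes.

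First I would set up the encoding. Fix for each rich \(w\) one palindromic factorization of minimal length and record (i) the tuple of lengths \((\ell_1,\dots,\ell_m)\), \(\ell_i=\vert p_i\vert\), and (ii) for each \(i\) the prefix \(u_i\) of \(p_i\) of length \(\lceil\ell_i/2\rceil\). Since \(p_i\) is a palindrome, \(u_i\) determines \(p_i\), so the record ((i),(ii)) determines \(w\); hence \(\RR_{\qq}(n)\) is at most the number of such records. The lengths form a composition of \(n\) into at most \(\kappa_n\) parts, and the number of these is bounded by the number of ways to choose the at most \(\kappa_n-1\) internal cut points among \(n-1\) positions, i.e. by \(\binom{n}{\kappa_n}\le(\frac{en}{\kappa_n})^{\kappa_n}\) after absorbing lower-order factors by a standard binomial estimate; this is the last factor of the claimed bound.

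Next I would count the prefixes. Each \(u_i\) is a prefix of the factor \(p_i\) of \(w\), hence a factor of \(w\); as every factor of a rich word is rich, \(u_i\) is a rich word of length \(\lceil\ell_i/2\rceil\), so there are at most \(\RR_{\qq}(\lceil\ell_i/2\rceil)\le Kh^{\lceil\ell_i/2\rceil}\) choices for it. For a fixed composition the number of prefix-tuples is therefore at most \(\prod_{i=1}^m Kh^{\lceil\ell_i/2\rceil}=K^m h^{\sum_{i=1}^m\lceil\ell_i/2\rceil}\). Using \(K\ge 1\), \(h>1\), \(m\le\kappa_n\), and \(\sum_{i=1}^m\lceil\ell_i/2\rceil\le\frac12\sum_{i=1}^m(\ell_i+1)=\frac{n+m}{2}\le\frac{n+\kappa_n}{2}\), this is at most \(K^{\kappa_n}h^{\frac{n+\kappa_n}{2}}\) uniformly in the composition. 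Multiplying the composition count by this uniform per-composition bound gives \(\RR_{\qq}(n)\le K^{\kappa_n}h^{\frac{n+\kappa_n}{2}}(\frac{en}{\kappa_n})^{\kappa_n}\), as required.

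The step I expect to be the real crux is the decomposition into at most \(\kappa_n=\lceil c_1\frac{n}{\ln n}\rceil\) palindromes: everything downstream is calibrated to this rate, since \((\frac{en}{\kappa_n})^{\kappa_n}=\exp(O(n\frac{\ln\ln n}{\ln n}))\) is subexponential exactly because \(\kappa_n\) is of order \(n/\ln n\), and a weaker palindromic-length bound would spoil the overhead. This decomposition is the known upper bound on the palindromic length of rich words and is the genuinely hard external input; within the present argument the only care needed is the bookkeeping that the record ((i),(ii)) is injective and that the rounding in \(\lceil\ell_i/2\rceil\) costs no more than the harmless \(h^{\kappa_n/2}\) already displayed in the statement.
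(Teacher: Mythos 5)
The paper does not prove this proposition: it is imported verbatim as Proposition 9 of \cite{RukavickaRichWords2017} and used as a black box, so there is no in-paper proof to compare yours against. Your reconstruction is, as far as I can tell, the argument of the cited source, and its skeleton is sound: the one genuinely hard ingredient is the palindromic-length bound \(m\le\kappa_n=\lceil c_1\tfrac{n}{\ln n}\rceil\) for rich words (which is exactly what \cite{RukavickaRichWords2017} establishes before its Proposition 9, as the introduction of the present paper notes), and the rest is the injective encoding of a rich word by the composition \((\ell_1,\dots,\ell_m)\) of \(n\) together with the half-prefixes \(u_i\), each counted by the hypothesis \(\RR_{\qq}(\lceil\ell_i/2\rceil)\le Kh^{\lceil\ell_i/2\rceil}\) because factors of rich words are rich. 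Two bookkeeping points deserve repair. First, the number of compositions of \(n\) into at most \(\kappa_n\) parts is \(\sum_{j=0}^{\kappa_n-1}\binom{n-1}{j}\), and this is \emph{not} bounded by \(\binom{n}{\kappa_n}\) in general (take \(n=4\), \(\kappa_n=2\): \(7>6\)); you should instead apply the standard estimate \(\sum_{j\le k}\binom{N}{j}\le(eN/k)^k\) directly and use that \(k\mapsto(en/k)^k\) is increasing for \(k\le n\) to reach \((en/\kappa_n)^{\kappa_n}\). Second, that monotonicity argument, and indeed the usefulness of the factor \((en/\kappa_n)^{\kappa_n}\) at all, requires \(\kappa_n\le n\), which can fail for small \(n\) when \(c_1\) is not small; the degenerate range must be handled separately (or absorbed into the choice of \(c_1\)), since for \(\kappa_n>en\) the factor \((en/\kappa_n)^{\kappa_n}\) is less than \(1\) and no longer dominates the composition count. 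Neither issue touches the crux you correctly isolate, namely the external \(O(n/\ln n)\) palindromic-length bound, without which the whole calibration collapses.
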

\begin{remark}
    Realize that \(\RR_{\qq}(n)\leq \qq^{n}\). Hence in Proposition \ref{iud9bn56s42} we can consider \(\KK=1\) and \(h=\qq\).
\end{remark}

\begin{remark}
    Due to the definition of \(\kappa_n\) with \(\ln{n}\) in the denominator, Proposition \ref{iud9bn56s42} does not hold for \(n=1\).
\end{remark}

To get rid of the rounding brackets \(\lceil,\rceil\) in  \(\kappa_n\), we restate Proposition \ref{iud9bn56s42} as follows.
\begin{corollary}
\label{jjduf983nhdh}
Suppose \(q\in\mathbb{N}_1\). There is \(c_2\in\mathbb{R}_1\) such that:
    If \(\sigma_n= c_2\frac{n}{\ln{n}}\), \(h> 1\), \(K\geq 1\), and \(\RR_{\qq}(n)\leq Kh^n\) for all \(n\in\mathbb{N}_1\) then \[\RR_{\qq}(n)\leq K^{\sigma_n}h^{\frac{n+\sigma_n}{2}}(c_2\ln{n})^{\sigma_n}\mbox{ for all }n\geq 2\mbox{.}\]
\end{corollary}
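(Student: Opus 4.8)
The plan is to deduce the corollary directly from Proposition \ref{iud9bn56s42}, choosing \(c_2\) large enough to absorb the rounding hidden in \(\kappa_n\). Write \(c_1\) for the constant supplied by Proposition \ref{iud9bn56s42}; it must be positive, since otherwise \(c_1\frac{n}{\ln n}\leq 0\) for large \(n\) and \(\kappa_n\) would cease to be a positive integer, making the factor \((\frac{en}{\kappa_n})^{\kappa_n}\) meaningless. The two facts I would lean on throughout are the elementary ceiling bounds
\[c_1\frac{n}{\ln n}\leq \kappa_n < c_1\frac{n}{\ln n}+1,\]
together with \(\frac{n}{\ln n}\geq 1\) for every \(n\geq 2\).

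First I would fix \(c_2=\max\{c_1+1,\ \tfrac{e}{c_1},\ \tfrac{1}{\ln 2}\}\); since \(c_2\geq\frac{1}{\ln 2}>1\), we have \(c_2\in\mathbb{R}_1\) as required by the statement. The choice \(c_2\geq c_1+1\), combined with the upper ceiling bound and \(\frac{n}{\ln n}\geq 1\), gives
\[\kappa_n < c_1\frac{n}{\ln n}+1\leq (c_1+1)\frac{n}{\ln n}\leq c_2\frac{n}{\ln n}=\sigma_n,\]
so that \(\kappa_n<\sigma_n\) for all \(n\geq 2\). Because \(K\geq 1\) and \(h>1\), this single inequality already yields the first two factor comparisons, \(K^{\kappa_n}\leq K^{\sigma_n}\) and \(h^{\frac{n+\kappa_n}{2}}\leq h^{\frac{n+\sigma_n}{2}}\), by monotonicity.

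It remains to compare \((\frac{en}{\kappa_n})^{\kappa_n}\) with \((c_2\ln n)^{\sigma_n}\). Here I would use the \emph{lower} ceiling bound \(\kappa_n\geq c_1\frac{n}{\ln n}\) to control the base,
\[\frac{en}{\kappa_n}\leq \frac{en}{c_1\frac{n}{\ln n}}=\frac{e}{c_1}\ln n\leq c_2\ln n,\]
using \(c_2\geq\frac{e}{c_1}\). Then, since \(c_2\ln n\geq c_2\ln 2\geq 1\) for \(n\geq 2\), I can raise this base inequality to the power \(\kappa_n\) and afterwards enlarge the exponent from \(\kappa_n\) to \(\sigma_n\):
\[\Bigl(\frac{en}{\kappa_n}\Bigr)^{\kappa_n}\leq (c_2\ln n)^{\kappa_n}\leq (c_2\ln n)^{\sigma_n}.\]
Multiplying the three factor bounds gives \(\RR_{\qq}(n)\leq K^{\sigma_n}h^{\frac{n+\sigma_n}{2}}(c_2\ln n)^{\sigma_n}\) for all \(n\geq 2\), which is the claim.

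The only step demanding genuine care — and hence the main (if modest) obstacle — is this last factor, where the two ceiling bounds must be used in opposite directions: the lower bound on \(\kappa_n\) to shrink the base \(\frac{en}{\kappa_n}\), and the upper bound on \(\kappa_n\) to license the exponent trade \(\kappa_n\leq\sigma_n\). The trade is valid only because \(c_2\ln n\geq 1\), which is exactly why the constraint \(c_2\geq\frac{1}{\ln 2}\) is built into the definition of \(c_2\); everything else reduces to the monotonicity of \(K^{x}\) and \(h^{x}\) in \(x\).
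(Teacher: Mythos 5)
Your proposal is correct and follows essentially the same route as the paper: bound \(\frac{en}{\kappa_n}\leq\frac{e\ln n}{c_1}\leq c_2\ln n\) via the lower ceiling bound, and pick \(c_2\) large enough that \(\sigma_n\geq\kappa_n\) so the exponents can be enlarged. You merely spell out the details the paper compresses into ``clearly there is \(c_2\)'' — the explicit choice \(c_2=\max\{c_1+1,\frac{e}{c_1},\frac{1}{\ln 2}\}\), the monotonicity checks, and the observation that \(c_2\ln n\geq 1\) is needed for the exponent trade — all of which are sound.
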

\begin{proof}
Just realize that \[\frac{en}{\kappa_n}=\frac{en}{\lceil c_1\frac{n}{\ln{n}}\rceil}\leq \frac{en}{c_1\frac{n}{\ln{n}}}=\frac{e\ln{n}}{c_1}\mbox{.}\]
Clearly there is \(c_2\) such that \(\sigma_n\geq\kappa_n\) and \(c_2\geq \frac{e}{c_1}\). The corollary follows. This ends the proof.
\end{proof}

\section{Upper bound using a sequence \(\KK_{\alpha}\)}
For the rest of the paper, suppose \(\qq\in\mathbb{N}_1\). Let \(\RR(n)=\RR_{\qq}(n)\). 
Let \(c_2\) and \(\sigma_n\) be as in corollary \ref{jjduf983nhdh}.

Suppose \(\delta\in\mathbb{R}_1\) with \(\delta>1\).

\begin{lemma}
\label{dxce88fbn29}
There are \(c_3,c_4\in\mathbb{R}_1\) such that:
If \(n\in\mathbb{N}_1\), \(\alpha\in\mathbb{R}_1\), \(\alpha> c_3\), \(\KK\in\mathbb{R}_1\), \(\KK>c_4\), and \(n\geq e^{(\alpha^{\delta}\ln{\KK})}\) then  
     \[1\geq \frac{\KK^{\sigma_n}(\sqrt[\alpha]{\qq})^{\frac{n+\sigma_n}{2}}(c_2\ln{n})^{\sigma_n}}{(\sqrt[\alpha\lambda]{\qq})^{\frac{n}{2}}}\mbox{.}\]
\end{lemma}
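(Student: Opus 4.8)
The plan is to take the natural logarithm of the claimed inequality and reduce it to showing that a single bracketed quantity is non-positive. Writing $\sqrt[\alpha]{q}=q^{1/\alpha}$ and $\sqrt[\alpha\lambda]{q}=q^{1/(\alpha\lambda)}$, the inequality $1\geq(\cdots)$ is equivalent, after taking logarithms, to
\[\frac{n}{2\alpha\lambda}\ln q \;\geq\; \sigma_n\ln K + \frac{n+\sigma_n}{2\alpha}\ln q + \sigma_n\ln(c_2\ln n).\]
Substituting $\sigma_n=c_2\frac{n}{\ln n}$ from Corollary \ref{jjduf983nhdh} and dividing through by $n$, I would reduce the goal to
\[\frac{c_2\ln K}{\ln n} + \frac{c_2\ln(c_2\ln n)}{\ln n} + \frac{c_2\ln q}{2\alpha\ln n} \;\leq\; \frac{(1-\lambda)\ln q}{2\lambda\alpha},\]
where the right-hand side arises from the dominant term $\frac{\ln q}{2\alpha}\big(\tfrac1\lambda-1\big)$ and is positive because $0<\lambda<1$ and $\ln q>0$. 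Call the three summands on the left \emph{(a)}, \emph{(b)}, \emph{(c)}; it then suffices to make each of them at most one third of the right-hand side.

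The hypothesis $n\geq e^{\alpha^\delta\ln K}$ is exactly $\ln n\geq\alpha^\delta\ln K\geq\alpha^\delta\ln c_4$, and this single inequality drives every bound. Choosing $c_4>1$ (say $c_4=2$) guarantees $\ln c_4>0$, so $\ln n$ is bounded below by a positive quantity and every division is legitimate. For \emph{(a)} the estimate $\frac{\ln K}{\ln n}\leq\alpha^{-\delta}$ gives $(a)\leq c_2\alpha^{-\delta}$, independently of $K$; for \emph{(c)} the estimate $\frac{1}{\ln n}\leq\frac{1}{\alpha^\delta\ln c_4}$ gives $(c)\leq\frac{c_2\ln q}{2\alpha^{\delta+1}\ln c_4}$. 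Multiplying the per-term target $\tfrac13\cdot(\text{RHS})$ by $\alpha$ turns the right-hand side into the fixed positive constant $\frac{(1-\lambda)\ln q}{6\lambda}$, so it remains only to check that each bound, multiplied by $\alpha$, tends to $0$ as $\alpha\to\infty$. For \emph{(a)} this is $c_2\alpha^{1-\delta}\to0$ and for \emph{(c)} it is $\frac{c_2\ln q}{2\alpha^{\delta}\ln c_4}\to0$, both because $\delta>1$ makes $\alpha^{\delta-1}\to\infty$.

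The only delicate term is \emph{(b)}, which carries the iterated-logarithm-type factor $\ln(c_2\ln n)$. Here I would use that $t\mapsto\frac{\ln(c_2 t)}{t}$ attains its maximum at $t=e/c_2$ and is decreasing afterwards; since $\alpha>c_3$ forces $t=\ln n\geq\alpha^\delta\ln c_4$ to lie in the decreasing regime, I may substitute this lower bound to obtain $(b)\leq c_2\frac{\ln c_2+\delta\ln\alpha+\ln\ln c_4}{\alpha^\delta\ln c_4}$. Multiplied by $\alpha$ this behaves like $\frac{\delta\ln\alpha}{\alpha^{\delta-1}}$, which again tends to $0$ precisely because $\delta>1$; a plain $\delta=1$ would leave an unbounded-free $\ln\alpha$ factor and the argument would fail, so this is the step I expect to be the main obstacle, and it is where the exponent $\delta>1$ earns its keep. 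Since all three bounds times $\alpha$ tend to $0$ while the target constant $\frac{(1-\lambda)\ln q}{6\lambda}$ is fixed and positive, I can fix $c_4=2$ and then choose $c_3\geq1$ large enough that each of \emph{(a)}, \emph{(b)}, \emph{(c)} falls below one third of the right-hand side for every $\alpha>c_3$. Summing the three estimates yields the displayed inequality, hence the logarithm of the ratio is non-positive, which is exactly the claim.
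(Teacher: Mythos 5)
Your proposal is correct and follows essentially the same route as the paper: take logarithms, normalize, use the hypothesis \(n\geq e^{\alpha^{\delta}\ln\KK}\) to convert the \(n\)-dependence into bounds in \(\alpha\) alone, and let \(\delta>1\) force the \(\alpha^{\delta-1}\)-type decay that makes the inequality hold for all \(\alpha>c_3\). The only cosmetic differences are that you divide by \(n\) rather than by \(\sigma_n\) and bound three terms separately against thirds of the target instead of taking a single limit of a ratio, which arguably makes the uniformity in \(\KK>c_4\) more transparent.
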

\begin{proof}
    We have that \begin{align*}\label{juudj9334}
    1&\geq \frac{\KK^
    {\sigma_n}(\sqrt[\alpha]{\qq})^{\frac{n+\sigma_n}{2}}(c_2\ln{n})^{\sigma_n}}{(\sqrt[\alpha\lambda]{\qq})^{\frac{n}{2}}} &\iff \\
    (\sqrt[\alpha\lambda]{\qq})^{\frac{n}{2}}&\geq \KK^{\sigma_n}(\sqrt[\alpha]{\qq})^{\frac{n+\sigma_n}{2}}(c_2\ln{n})^{\sigma_n} &\iff \\
    \frac{(\sqrt[\alpha\lambda]{\qq})^{\frac{n}{2}}}{(\sqrt[\alpha]{\qq})^{\frac{n}{2}}}&\geq \KK^{\sigma_n}(\sqrt[\alpha]{\qq})^{\frac{\sigma_n}{2}}(c_2\ln{n})^{\sigma_n}&\iff \\
    \frac{\qq^{\frac{n}{2\alpha\lambda}}}{\qq^{\frac{n}{2\alpha}}}&\geq \KK^{\sigma_n}\qq^{\frac{\sigma_n}{2\alpha}}(c_2\ln{n})^{\sigma_n} &\iff \\    
    \qq^{\frac{n-n\lambda}{2\alpha\lambda}}&\geq \KK^{\sigma_n}\qq^{\frac{\sigma_n}{2\alpha}}(c_2\ln{n})^{\sigma_n} &\iff \\
    \qq^{\frac{n(1-\lambda)}{2\alpha\lambda}}&\geq \KK^{\sigma_n}\qq^{\frac{\sigma_n}{2\alpha}}(c_2\ln{n})^{\sigma_n} &\iff \\
    \frac{n(1-\lambda)}{2\alpha\lambda}\ln{\qq}&\geq \sigma_n\ln{\KK}+\frac{\sigma_n}{2\alpha}\ln{\qq}+\sigma_n\ln{(c_2\ln{n})} &\iff \\
    \frac{n(1-\lambda)}{2\alpha\lambda}\ln{\qq}&\geq \sigma_n\ln{\KK}+\frac{\sigma_n}{2\alpha}\ln{\qq}+\sigma_n\ln{c_2}+\sigma_n\ln{\ln{n}} &\iff \\
    \frac{n(1-\lambda)}{2\alpha\lambda\sigma_n}\ln{\qq}&\geq \ln{\KK}+\frac{1}{2\alpha}\ln{\qq}+\ln{c_2}+\ln{\ln{n}} &\iff \\
    \frac{(1-\lambda)\ln{n}}{2\alpha\lambda c_2}\ln{\qq}-\ln{\ln{n}} &\geq \ln{\KK}+\ln{c_2}+\frac{1}{2\alpha}\ln{\qq}        
    \mbox{.}
\end{align*}
It follows that \begin{equation}
    \label{juudj9334}
    \begin{split}
    1 \geq \frac{\KK^{\sigma_n}(\sqrt[\alpha]{\qq})^{\frac{n+\sigma_n}{2}}(c_2\ln{n})^{\sigma_n}}{(\sqrt[\alpha\lambda]{\qq})^{\frac{n}{2}}} \iff \\
    \frac{(1-\lambda)\ln{n}}{2\alpha\lambda c_2}\ln{\qq}-\ln{\ln{n}} \geq \ln{\KK}+\ln{c_2}+\frac{1}{2\alpha}\ln{\qq}        
    \mbox{.}\end{split}
\end{equation}
Let \(c_4\in\mathbb{R}_1\) be such that if \(\KK>c_4\), \(n_1,n_2\geq e^{(\alpha^{\delta}\ln{K})}\), and \(n_1<n_2\) then 
\[\frac{(1-\lambda)\ln{n_1}}{2\alpha\lambda c_2}\ln{\qq}-\ln{\ln{n_2}}\leq\frac{(1-\lambda)\ln{n_2}}{2\alpha\lambda c_2}\ln{\qq}-\ln{\ln{n_2}}\mbox{.}\] Obviously such \(c_4\) exists.
Then for \(\KK>c_4\) and \(n\geq e^{(\alpha^{\delta}\ln{K})}\) we have that 
\begin{equation}\label{opp00993b}\begin{split}
    \frac{(1-\lambda)\ln{n}}{2\alpha\lambda c_2}\ln{\qq}-\ln{\ln{n}}\geq\\
    \frac{(1-\lambda)\ln{(e^{(\alpha^{\delta}\ln{K})})}}{2\alpha\lambda c_2}\ln{\qq}-\ln{\ln{(e^{(\alpha^{\delta}\ln{K})})}}=\\
    \frac{(1-\lambda)\alpha^{\delta}\ln{K}}{2\alpha\lambda c_2}\ln{\qq}-\ln{(\alpha^{\delta}\ln{K})}=\\ \frac{(1-\lambda)\alpha^{\delta-1}\ln{K}}{2\lambda c_2}\ln{\qq}-\ln{(\alpha^{\delta}\ln{K})}\mbox{.} 
\end{split}\end{equation}
Recall that \(\delta>1\) and realize that \(\KK>c_4\geq 1\). Hence it is clear that \[\lim_{\alpha\rightarrow\infty}\frac{\frac{(1-\lambda)\alpha^{\delta-1}\ln{K}}{2\lambda c_2}\ln{\qq}-\ln{(\alpha^{\delta}\ln{K})}}{\ln{\KK}+\ln{c_2}+\frac{1}{2\alpha}}=\infty\]
Then, the lemma follows from (\ref{juudj9334}) and (\ref{opp00993b}).
This ends the proof.
\end{proof}

\begin{remark}
    The basic idea of the proof of Lemma \ref{dxce88fbn29} is that \(\lim_{n\rightarrow\infty}\frac{\qq^{\frac{n}{\alpha}}}{\qq^{\frac{n}{\alpha\lambda}}}=0\).
\end{remark}

Given \(\alpha\in\mathbb{R}_1\), let \(\HH(\alpha)\in\mathbb{R}_1\) be such that \(\RR(n)\leq \HH(\alpha)\qq^{\frac{n}{\alpha}}\) for all \(n\in\mathbb{N}_1\). Corollary \ref{d788354dn} asserts that such \(\HH(\alpha)\) exists.

Suppose \(\lambda\in\mathbb{R}\) with \(\frac{1}{2}<\lambda<1\).
Let \(\alpha_j=(2\lambda)^{j-1}\), where \(j\in\mathbb{N}_1\).
Let \(\beta_1=1\) and let \(\KK_1\in\mathbb{R}_1\) with   \(\KK_1>\max\{\HH(c_3),c_4\}\). Let \(\beta_j=\lceil e^{(\alpha_{j-1}^{\delta}\ln{\KK_{j-1}})}\rceil\) and 
\(\KK_j=\qq^{\beta_j}\), 
where \(j\in\mathbb{N}_1\) with \(j\geq 2\).

\begin{proposition}
    \label{nhdj7738dv}
    If \(j\in\mathbb{N}_1\) then \(\RR(n)\leq \KK_{j}\qq^{\frac{n}{\alpha_{j}}}\) for all \(n\in\mathbb{N}_1\).
\end{proposition}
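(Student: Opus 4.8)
The plan is to prove the statement by induction on \(j\), using Corollary \ref{jjduf983nhdh} as the engine that converts the level-\(j\) bound into an ``averaged'' intermediate bound, and Lemma \ref{dxce88fbn29} to collapse that intermediate bound down to the level-\((j+1)\) growth rate. Throughout I may assume \(\qq\geq 2\), since for \(\qq=1\) we have \(\RR(n)=1\) and every claimed bound is trivial. The base case \(j=1\) is immediate: \(\alpha_1=1\), so \(\qq^{n/\alpha_1}=\qq^n\) and \(\RR(n)\leq\qq^n\leq\KK_1\qq^n\) because \(\KK_1\geq 1\).

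For the inductive step I assume \(\RR(n)\leq\KK_j\qq^{n/\alpha_j}\) for all \(n\) and aim at the same bound with \(j\) replaced by \(j+1\); recall \(\alpha_{j+1}=(2\lambda)\alpha_j\). First I would apply Corollary \ref{jjduf983nhdh} with \(\KK=\KK_j\) and \(h=\sqrt[\alpha_j]{\qq}>1\); the hypothesis \(\RR(n)\leq\KK_j h^n\) is exactly the induction hypothesis, and for every \(n\geq 2\) it yields \(\RR(n)\leq\KK_j^{\sigma_n}(\sqrt[\alpha_j]{\qq})^{(n+\sigma_n)/2}(c_2\ln n)^{\sigma_n}\). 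This is precisely the numerator appearing in Lemma \ref{dxce88fbn29} with \(\alpha=\alpha_j\) and \(\KK=\KK_j\). Hence, whenever \(n\geq\beta_{j+1}=\lceil e^{\alpha_j^\delta\ln\KK_j}\rceil\) (so in particular \(n\geq e^{\alpha_j^\delta\ln\KK_j}\), and since \(\KK_j>1\) together with \(\alpha_j\geq 1\) forces \(\beta_{j+1}\geq 2\), also \(n\geq 2\)), the lemma gives \(\RR(n)\leq(\sqrt[\alpha_j\lambda]{\qq})^{n/2}=\qq^{n/(2\lambda\alpha_j)}=\qq^{n/\alpha_{j+1}}\leq\KK_{j+1}\qq^{n/\alpha_{j+1}}\). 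For the remaining range \(n<\beta_{j+1}\) I would use only the trivial bound: since \(n\leq\beta_{j+1}\) and \(n/\alpha_{j+1}\geq 0\), we have \(\RR(n)\leq\qq^n\leq\qq^{\beta_{j+1}+n/\alpha_{j+1}}=\KK_{j+1}\qq^{n/\alpha_{j+1}}\). The two ranges meet exactly because \(\KK_{j+1}\) is defined as \(\qq^{\beta_{j+1}}\) and \(\beta_{j+1}\) as the ceiling of the lemma's threshold; this matching is the whole point of those definitions.

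The main obstacle is that Lemma \ref{dxce88fbn29} requires \(\alpha_j>c_3\) and \(\KK_j>c_4\), and the first of these fails for the finitely many small indices with \((2\lambda)^{j-1}\leq c_3\). The condition \(\KK_j>c_4\) is harmless: \(\KK_1>c_4\) by definition, and the sequence \(\KK_j=\qq^{\beta_j}\) is increasing, because the exponents \(\beta_j\) grow along the recursion. For the levels where \(\alpha_j\leq c_3\) I would bypass the lemma and bound \(\RR(n)\) directly from the definition of \(\HH\): since \(\alpha_j\leq c_3\) gives \(\qq^{n/c_3}\leq\qq^{n/\alpha_j}\), the estimate \(\RR(n)\leq\HH(c_3)\qq^{n/c_3}\leq\KK_j\qq^{n/\alpha_j}\) holds as soon as \(\KK_j\geq\HH(c_3)\), which is guaranteed by \(\KK_1>\HH(c_3)\) and the monotonicity of \(\KK_j\). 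The one genuinely delicate case is the single transition index at which \(\alpha_j\) first exceeds \(c_3\): there the induction cannot yet invoke the lemma from its predecessor, so I would again appeal to \(\RR(n)\leq\HH(\alpha_j)\qq^{n/\alpha_j}\leq\KK_j\qq^{n/\alpha_j}\), noting that this transition index depends only on \(\lambda\) and \(c_3\) and is therefore a fixed constant, so \(\HH(\alpha_j)\) is one fixed number while the rapidly growing \(\KK_j\) dominates it once \(\KK_1\) is taken large enough above \(\max\{\HH(c_3),c_4\}\). Having thus established every level up to and including this transition index directly, the Lemma-based step of the previous paragraph then runs unobstructed for all larger \(j\), since from that point on \(\alpha_j>c_3\) and \(\KK_j>c_4\) both hold, completing the induction.
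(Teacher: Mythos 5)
Your proof follows essentially the same route as the paper's: induction on \(j\), Corollary \ref{jjduf983nhdh} to produce the intermediate bound, Lemma \ref{dxce88fbn29} to collapse it to \(\qq^{n/\alpha_{j+1}}\) for \(n\geq\beta_{j+1}\), the trivial bound \(\qq^{n}\leq\qq^{\beta_{j+1}}=\KK_{j+1}\) for \(n<\beta_{j+1}\), and the \(\HH(c_3)\) bound when \(\alpha_{j}\leq c_3\). The one place you go beyond the paper is the transition index at which \(\alpha_j\) first exceeds \(c_3\) while \(\alpha_{j-1}\leq c_3\) (so Lemma \ref{dxce88fbn29} cannot yet be invoked with \(\alpha=\alpha_{j-1}\)): the paper's case split on \(\alpha_j\) silently glosses over this, and your patch via \(\HH\) at that fixed index together with a slightly enlarged \(\KK_1\) is a legitimate and welcome repair.
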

\begin{proof}

    We have that \(\KK_1>\max\{\HH(c_3),c_4\}\geq c_4\geq 1\). 
    For \(j=1\), we have that \(\RR(n)\leq \qq^{n}\leq \KK_1\qq^{\frac{n}{\alpha_1}}=\KK_1\qq^{n}\). Thus the proposition holds for \(j=1\).

    We prove the proposition by induction on \(j\).
    Let \(j>1\) and suppose the lemma holds for \(j-1\). 
    Hence we have that \[\RR(n)\leq \KK_{j-1}\qq^{\frac{n}{\alpha_{j-1}}}\mbox{.}\]
    Then Corollary \ref{jjduf983nhdh} implies that 
    \[\begin{split}\RR(n)\leq \KK_{j-1}^{\sigma_n}\qq^{\frac{n+\sigma_n}{2\alpha_{j-1}}}(c_2\ln{n})^{\sigma_n}\mbox{.}\end{split}\]    

    \begin{itemize}
        \item If \(\alpha_{j}\leq c_3\) then \[\RR(n)\leq \HH(c_3)\qq^{\frac{n}{c_3}}\leq \HH(c_3)\qq^{\frac{n}{\alpha_j}}\leq \KK_j\qq^{\frac{n}{\alpha_j}}\mbox{ for all  }n\in\mathbb{N}_1\mbox{.}\] Just realize that \(\KK_i\leq \KK_{i+1}\) for all \(i\in\mathbb{N}_1\) and \(\KK_1>\HH(c_3)\).
        \item If \(\alpha_{j}> c_3\) then 
    Lemma \ref{dxce88fbn29} implies that for \(n\geq\qq^{\lceil\alpha_{j-1}^{\delta}\ln{\KK_{j-1}}\rceil}=\beta_j\) we have that 
    \[\begin{split}\KK_{j-1}^{\sigma_n}\qq^{\frac{n+\sigma_n}{2\alpha_{j-1}}}(c_2\ln{n})^{\sigma_n}\leq \qq^{\frac{n}{2\lambda\alpha_{j-1}}}=\qq^{\frac{n}{\alpha_{j}}}\leq \KK_j\qq^{\frac{n}{\alpha_j}}\mbox{.}\end{split}\]     
    Realize that \(\KK_{j-1}>c_4\).
    For \(n<\beta_j\) we have that 
    \[\begin{split}
        \RR(n)\leq \RR(\beta_j)\leq \qq^{\beta_j}=\KK_j\leq \KK_j\qq^{\frac{n}{\alpha_j}}\mbox{.}
    \end{split}\]
    Hence we have that \(\RR(n)\leq \KK_j\qq^{\frac{n}{\alpha_j}}\) for all \(n\in\mathbb{N}_1\).
    \end{itemize}
    By induction on \(j\) we conclude that \(\RR(n)\leq \KK_j\qq^{\frac{n}{\alpha_j}}\) for all \(n\in\mathbb{N}_1\) and all \(j\in\mathbb{N}_1\). 
    This ends the proof.
\end{proof}


\begin{corollary}
\label{jjduf9223v}
If \(\upsilon\in\Delta\) then \[\RR(n)\leq \KK_{\lfloor\upsilon(n)\rfloor}\qq^{\frac{n}{\alpha_{\lfloor\upsilon(n)\rfloor}}}\mbox{ for all }n\in\mathbb{N}_1\mbox{.}\]
\end{corollary}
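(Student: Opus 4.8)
The plan is to recognize that this corollary is nothing more than a pointwise specialization of Proposition \ref{nhdj7738dv}, in which the index \(j\) is allowed to depend on \(n\) rather than being held fixed. The crucial feature of Proposition \ref{nhdj7738dv} is that, for \emph{every} fixed \(j\in\mathbb{N}_1\), it gives the bound \(\RR(m)\leq\KK_j\qq^{\frac{m}{\alpha_j}}\) uniformly over all \(m\in\mathbb{N}_1\). Consequently, for any particular \(n\) I am free to choose whichever index \(j\) I please and then read off the resulting inequality at \(m=n\).

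Concretely, I would fix an arbitrary \(n\in\mathbb{N}_1\) and set \(j=\lfloor\upsilon(n)\rfloor\). The only genuine verification is that this \(j\) is a legitimate index, i.e.\ that \(j\in\mathbb{N}_1\); this is exactly where the hypothesis \(\upsilon\in\Delta\) enters. Since every function in \(\Delta\) maps \(\mathbb{R}_1\) into \(\mathbb{R}_1\), we have \(\upsilon(n)\geq 1\), and therefore \(\lfloor\upsilon(n)\rfloor\geq 1\). Thus \(j\in\mathbb{N}_1\), and Proposition \ref{nhdj7738dv} applies verbatim with this value of \(j\). Instantiating that proposition at \(m=n\) yields precisely \(\RR(n)\leq\KK_{\lfloor\upsilon(n)\rfloor}\qq^{\frac{n}{\alpha_{\lfloor\upsilon(n)\rfloor}}}\), and since \(n\) was arbitrary this is the claimed inequality for all \(n\in\mathbb{N}_1\).

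I do not anticipate any real obstacle: the entire content is the (elementary) observation that \(\lfloor\upsilon(n)\rfloor\) never drops below \(1\), which is forced by the codomain \(\mathbb{R}_1\) built into the definition of \(\Delta\). It is worth noting that the growth condition \(\lim_{x\to\infty}\upsilon(x)=\infty\) in the definition of \(\Delta\) is not used for the inequality itself; that hypothesis is irrelevant to the bound here and will instead become essential only later, when this pointwise estimate is paired with the decay of the factor \(\qq^{\frac{n}{\alpha_{\lfloor\upsilon(n)\rfloor}}}\) to produce a subexponential upper bound on \(\RR(n)\).
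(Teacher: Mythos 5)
Your proposal is correct and is essentially identical to the paper's own proof: both simply note that \(\upsilon\in\Delta\) forces \(\upsilon(n)\geq 1\), hence \(\lfloor\upsilon(n)\rfloor\in\mathbb{N}_1\), and then instantiate Proposition \ref{nhdj7738dv} with \(j=\lfloor\upsilon(n)\rfloor\). Your closing remark that the growth condition in \(\Delta\) is not needed for this inequality is accurate and a nice observation, though the paper does not make it explicit.
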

\begin{proof}
If \(n\in\mathbb{N}_1\) then from the definition of \(\Delta\) we have that \(\upsilon(n)\geq 1\) and consequently \(\lfloor\upsilon(n)\rfloor\geq 1\).
Then the lemma follows immediately from Proposition \ref{nhdj7738dv}.
This ends the proof.
\end{proof}

We show a construction of a subexpontial function \(\GG:\mathbb{R}_1\rightarrow \mathbb{R}_1\), that we use later for the construction of an upper bound on the number of rich words.
\begin{proposition}
\label{iodif098re}
    If \(\phi\in\Phi\), \(\upsilon\in\Delta\), \(\upsilon(1)=1\), \(x\in\mathbb{R}_1\), \(\tau(x)=\max\{j\in\mathbb{N}_1\mid \upsilon(j)\leq \qq^{\frac{x}{\phi(x)}}\}\), and  \(\GG(x)=\upsilon(\tau(x))\qq^{\frac{x}{\alpha_{\tau(x)}}}\) then \(\lim_{x\rightarrow\infty}\sqrt[x]{\GG(x)}\leq 1\) and \(\upsilon(\tau(x))\leq \qq^{\frac{x}{\phi(x)}}\).
\end{proposition}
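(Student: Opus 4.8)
The plan is to verify first that $\tau(x)$ is well defined, then read off the bound $\upsilon(\tau(x))\le\qq^{\frac{x}{\phi(x)}}$ directly from the definition of the maximum, and finally to write $\sqrt[x]{\GG(x)}$ as a product of two factors that each tend to $1$.

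First I would show that the set $S_x=\{j\in\mathbb{N}_1\mid\upsilon(j)\le\qq^{\frac{x}{\phi(x)}}\}$ is nonempty and bounded above, so that $\tau(x)=\max S_x$ exists. Nonemptiness follows from $\upsilon(1)=1\le\qq^{\frac{x}{\phi(x)}}$, using $\qq\ge1$ and $\frac{x}{\phi(x)}\ge0$, so that $1\in S_x$. Boundedness follows from $\upsilon\in\Delta$: since $\lim_{j\rightarrow\infty}\upsilon(j)=\infty$, for the fixed number $\qq^{\frac{x}{\phi(x)}}$ all sufficiently large $j$ violate the defining inequality. Once $\tau(x)$ exists, the second assertion $\upsilon(\tau(x))\le\qq^{\frac{x}{\phi(x)}}$ is immediate, because $\tau(x)\in S_x$.

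Next I would establish $\lim_{x\rightarrow\infty}\tau(x)=\infty$ (in the main case $\qq\ge2$). Fix any $J\in\mathbb{N}_1$. Since $\phi\in\Phi$ gives $\frac{x}{\phi(x)}\rightarrow\infty$, we have $\qq^{\frac{x}{\phi(x)}}\rightarrow\infty$, so for all large $x$ the inequality $\qq^{\frac{x}{\phi(x)}}\ge\upsilon(J)$ holds, i.e.\ $J\in S_x$ and hence $\tau(x)\ge J$. Then, writing $\sqrt[x]{\GG(x)}=\upsilon(\tau(x))^{\frac1x}\,\qq^{\frac{1}{\alpha_{\tau(x)}}}$, I bound each factor. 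By the second assertion, $1\le\upsilon(\tau(x))^{\frac1x}\le\qq^{\frac{1}{\phi(x)}}\rightarrow1$, using $\phi(x)\rightarrow\infty$. For the second factor, recall $\alpha_{\tau(x)}=(2\lambda)^{\tau(x)-1}$ with $2\lambda>1$ (because $\frac12<\lambda<1$); since $\tau(x)\rightarrow\infty$ this forces $\alpha_{\tau(x)}\rightarrow\infty$, whence $\qq^{\frac{1}{\alpha_{\tau(x)}}}\rightarrow1$. The product therefore tends to $1$, giving $\lim_{x\rightarrow\infty}\sqrt[x]{\GG(x)}\le1$ (in fact the limit equals $1$, as both factors are at least $1$).

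This argument is essentially bookkeeping, and I do not expect a genuine obstacle; the only points requiring care are the well-definedness of $\tau(x)$ as a maximum (the set $S_x$ depends on $x$, and its boundedness uses $\upsilon\in\Delta$) and the degenerate alphabet $\qq=1$, which I would dispatch separately. When $\qq=1$ the quantity $\qq^{\frac{x}{\phi(x)}}$ is constantly $1$, so $\tau(x)$ is bounded and $\GG(x)=\upsilon(\tau(x))$ is bounded, whence $\sqrt[x]{\GG(x)}\rightarrow1$ trivially and both conclusions hold.
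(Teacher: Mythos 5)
Your proposal is correct and follows essentially the same route as the paper: both read off $\upsilon(\tau(x))\leq \qq^{\frac{x}{\phi(x)}}$ from the definition of $\tau(x)$, deduce $\tau(x)\rightarrow\infty$ and hence $\alpha_{\tau(x)}\rightarrow\infty$, and conclude via $\sqrt[x]{\GG(x)}\leq \qq^{\frac{1}{\phi(x)}}\qq^{\frac{1}{\alpha_{\tau(x)}}}\rightarrow 1$. Your added care about the boundedness of the set defining $\tau(x)$ and the degenerate case $\qq=1$ goes slightly beyond what the paper writes but does not change the argument.
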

\begin{proof}
Realize that \(\tau(x)\) is well defined, since \(\upsilon(1)=1\leq \qq^{\frac{x}{\phi(x)}}\) for all \(x\in\mathbb{R}_1\).
    Let \(\overline \GG(x)=\qq^{\frac{x}{\phi(x)}}\qq^{\frac{x}{\alpha_{\tau(x)}}}\).
    From the definition of \(\tau(x)\) we have that \(\upsilon(\tau(x))\leq \qq^{\frac{x}{\phi(x)}}\), where \(x\in\mathbb{R}_1\). It follows that \(\GG(x)\leq \overline \GG(x)\). 
    We have that \(\lim_{x\rightarrow\infty}\alpha_{\tau(x)}=\infty\) since clearly  \(\lim_{x\rightarrow\infty}\tau(x)=\infty\). 
    Then we have that \[\lim_{x\rightarrow\infty}\sqrt[x]{\GG(x)}\leq \lim_{x\rightarrow\infty}\sqrt[x]{\overline \GG(x)}=\lim_{x\rightarrow\infty}\sqrt[x]{\qq^{\frac{x}{\phi(x)}}\qq^{\frac{x}{\alpha_{\tau(x)}}}}=\lim_{x\rightarrow\infty}\qq^{\frac{1}{\phi(x)}}\qq^{\frac{1}{\alpha_{\tau(x)}}}=\qq^0=1\mbox{.}\]
    The proposition follows. This ends the proof.
\end{proof}

The next theorem presents a subexponential upper bound on the number of rich words. As mentioned in the introduction, this upper bound is due to the definition of \(\KK_j\) and \(\tau(n)\) not really ``convenient''.
\begin{theorem}
\label{hhdbh7638ju}
    If \(\phi\in\Phi\), \(n\in\mathbb{N}_1\), \(\tau(n)=\max\{j\in\mathbb{N}_1\mid \KK_j\leq \max\{\KK_1,\qq^{\frac{n}{\phi(n)}}\}\}\), and \(G(n)=\KK_{\tau(n)}\qq^{\frac{n}{\alpha_{\tau(n)}}}\) then 
    \begin{itemize}\item \(\RR(n)\leq G(n)\) for all \(n\in\mathbb{N}_1\) and
    \item \(\lim_{n\rightarrow \infty}\sqrt[n]{G(n)}=1\mbox{.}\)
    \end{itemize}
\end{theorem}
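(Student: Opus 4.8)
The plan is to obtain the first bullet as an immediate instance of Proposition \ref{nhdj7738dv} and the second by repeating the averaging computation already carried out in Proposition \ref{iodif098re}. Before either step I would check that \(\tau(n)\) is well defined, i.e. that the set \(\{j\in\mathbb{N}_1\mid \KK_j\leq\max\{\KK_1,\qq^{\frac{n}{\phi(n)}}\}\}\) is non-empty and bounded above. Non-emptiness is trivial since \(j=1\) always belongs to it. For boundedness I would note that \((\KK_j)_j\) is non-decreasing and tends to infinity: from \(\beta_j=\lceil e^{(\alpha_{j-1}^{\delta}\ln{\KK_{j-1}})}\rceil\) one gets \(\beta_j\geq \KK_{j-1}^{\alpha_{j-1}^{\delta}}\geq \KK_{j-1}\), so \(\KK_j=\qq^{\beta_j}\geq \qq^{\KK_{j-1}}\), and with \(\KK_1>1\) this forces \(\KK_j\to\infty\). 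Hence the maximum defining \(\tau(n)\) exists for every \(n\in\mathbb{N}_1\).

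For \(\RR(n)\leq G(n)\) I would simply apply Proposition \ref{nhdj7738dv} at the index \(j=\tau(n)\): that proposition gives \(\RR(n)\leq \KK_j\qq^{\frac{n}{\alpha_j}}\) for every \(j\in\mathbb{N}_1\), so in particular \(\RR(n)\leq \KK_{\tau(n)}\qq^{\frac{n}{\alpha_{\tau(n)}}}=G(n)\). No additional estimate is required.

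For the limit I would mirror the proof of Proposition \ref{iodif098re}. Since \(\phi\in\Phi\) we have \(\frac{n}{\phi(n)}\to\infty\), hence \(\qq^{\frac{n}{\phi(n)}}\to\infty\), so there is \(n_1\) such that for \(n\geq n_1\) the maximum in the definition of \(\tau(n)\) equals \(\qq^{\frac{n}{\phi(n)}}\); by definition of \(\tau(n)\) this yields \(\KK_{\tau(n)}\leq \qq^{\frac{n}{\phi(n)}}\) and therefore
\[G(n)=\KK_{\tau(n)}\qq^{\frac{n}{\alpha_{\tau(n)}}}\leq \qq^{\frac{n}{\phi(n)}}\qq^{\frac{n}{\alpha_{\tau(n)}}}\mbox{,}\]
which is exactly the majorant \(\overline{\GG}\) appearing in Proposition \ref{iodif098re}. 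I would then observe that \(\tau(n)\to\infty\): for each fixed \(m\), as soon as \(\qq^{\frac{n}{\phi(n)}}\geq \KK_m\) we have \(\tau(n)\geq m\), and the left-hand side grows without bound, so \(\alpha_{\tau(n)}=(2\lambda)^{\tau(n)-1}\to\infty\). Taking \(n\)-th roots gives \(\sqrt[n]{G(n)}\leq \qq^{\frac{1}{\phi(n)}}\qq^{\frac{1}{\alpha_{\tau(n)}}}\to \qq^{0}\qq^{0}=1\), while \(\sqrt[n]{G(n)}\geq 1\) holds trivially because \(G(n)\geq 1\); the two estimates together give \(\lim_{n\to\infty}\sqrt[n]{G(n)}=1\).

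The only genuinely delicate point is the bookkeeping around \(\max\{\KK_1,\cdot\}\). The term \(\KK_1\) is there solely to make \(\tau(n)\) well defined for small \(n\), where \(\qq^{\frac{n}{\phi(n)}}\) may fall below \(\KK_1\); the entire asymptotic argument lives in the regime \(n\geq n_1\) in which this term disappears and \(\max\{\KK_1,\qq^{\frac{n}{\phi(n)}}\}=\qq^{\frac{n}{\phi(n)}}\). Everything else reduces to a direct invocation of Proposition \ref{nhdj7738dv} and a verbatim repetition of the limit computation in Proposition \ref{iodif098re}, so I expect no real obstacle beyond cleanly handling this case distinction.
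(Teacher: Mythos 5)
Your proof is correct and follows essentially the same route as the paper, whose own proof simply cites Corollary \ref{jjduf9223v} (i.e.\ Proposition \ref{nhdj7738dv} at \(j=\tau(n)\)) for the first bullet and Proposition \ref{iodif098re} for the second. You merely unfold those citations, and in doing so you are somewhat more careful than the paper: you verify that the set defining \(\tau(n)\) is bounded above (the paper only checks non-emptiness), and you explicitly handle the \(\max\{\KK_1,\cdot\}\) bookkeeping that prevents Proposition \ref{iodif098re} from applying verbatim, since that proposition assumes \(\upsilon(1)=1\) while \(\KK_1>1\).
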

\begin{proof}    
    Note that \(\tau(n)\in\mathbb{R}_1\) is well defined, since \(\max\{\KK_1,\qq^{\frac{n}{\phi(n)}}\}\geq \KK_1\). Then the theorem is obvious from Corollary \ref{jjduf9223v} and Proposition \ref{iodif098re}. This ends the proof.
\end{proof}

\section{Iterated logarithm}

Let \(a\uparrow\uparrow n\in\mathbb{R}_1\) denote the \emph{tetration} of \(a\in\mathbb{R}_1\) to \(n\in\mathbb{N}_0\); we have that 
\[
a\uparrow\uparrow n=\begin{cases}
			1, & \text{if } n=0\text{;}\\
            a^{a\uparrow\uparrow (n-1)}, & \text{if }n>0\mbox{.}
		 \end{cases}
\]

Given \(x\in\mathbb{R}\) with \(x>1\), let \(\ln^*{x}\) denote the \emph{iterated logarithm}; we have that 
\[
\ln^*(x)=\begin{cases}
			0, & \text{if } x\leq 1\text{;}\\
            1+\ln^*{(\ln{x})}, & \text{otherwise.}
		 \end{cases}
\]

We present several elementary properties of tetration and iterated logarithm.
\begin{lemma}
\begin{enumerate}[ref=P\arabic*,label=P\arabic*:]
\item \label{pnd88887f} If \(n\in\mathbb{N}_0\) then \(\ln^*(e\uparrow\uparrow n)=n\).
\item \label{jdiiek88899} If \(x\in\mathbb{R}_1\) and \(x>1\) then \(e\uparrow\uparrow (\ln^*{x}-1)<x\leq e\uparrow\uparrow \ln^*{x}\mbox{.}\)
\item \label{id89bcgdh8} If \(x\in\mathbb{R}_1\) then \(\ln^*{(e^x)}\leq\ln^*{x}+1\mbox{.}\)
\item \label{nxbd76628d} If \(x\in\mathbb{R}_1\), \(x>1\), \(y\in\mathbb{N}_0\), and \(x\leq e\uparrow\uparrow y\) then \(\ln^*{x}\leq y+1\).
\item \label{hhud87ej5f} If \(x,y\in\mathbb{R}_1\) and \(\ln^*{x}\leq y\) then \(x\leq e\uparrow\uparrow y\).
\end{enumerate}
\end{lemma}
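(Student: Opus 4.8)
The plan is to prove the five items in an order reflecting their logical dependence rather than their listed order, with \ref{jdiiek88899} as the central engine. First I would record one auxiliary fact used throughout: the map $n\mapsto e\uparrow\uparrow n$ is strictly increasing on $\mathbb{N}_0$. This follows by a one-line induction, since $e\uparrow\uparrow 0=1<e=e\uparrow\uparrow 1$ and $e\uparrow\uparrow(n+1)=e^{e\uparrow\uparrow n}$, so that $e\uparrow\uparrow(n-1)<e\uparrow\uparrow n$ implies $e\uparrow\uparrow n=e^{e\uparrow\uparrow(n-1)}<e^{e\uparrow\uparrow n}=e\uparrow\uparrow(n+1)$ by monotonicity of $t\mapsto e^t$. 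This monotonicity is exactly what converts comparisons of tetration heights into comparisons of iterated-logarithm values and back.

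I would prove \ref{pnd88887f} by induction on $n$. The base case $n=0$ is immediate: $\ln^*(e\uparrow\uparrow 0)=\ln^*(1)=0$ from the first branch of the definition. For $n\geq 1$ we have $e\uparrow\uparrow n=e^{e\uparrow\uparrow(n-1)}>1$, so the recursive branch applies and $\ln^*(e\uparrow\uparrow n)=1+\ln^*(\ln(e\uparrow\uparrow n))=1+\ln^*(e\uparrow\uparrow(n-1))=1+(n-1)=n$ by the induction hypothesis. Property \ref{jdiiek88899} is the crux, and I would prove it by induction on the integer $m=\ln^* x$. For $m=1$, unwinding the definition once gives $\ln^*(\ln x)=0$, hence $\ln x\leq 1$, i.e. $x\leq e=e\uparrow\uparrow 1$, while $x>1=e\uparrow\uparrow 0$ is the hypothesis; this is precisely the claimed sandwich. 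For $m\geq 2$ the recursion gives $\ln^*(\ln x)=m-1\geq 1$, which forces $\ln x>1$ (otherwise its iterated logarithm would be $0$); applying the induction hypothesis to $\ln x$ yields $e\uparrow\uparrow(m-2)<\ln x\leq e\uparrow\uparrow(m-1)$, and exponentiating both sides turns this into $e\uparrow\uparrow(m-1)<x\leq e\uparrow\uparrow m$, as required. The delicate points here are tracking the strict versus non-strict inequalities and checking at each stage that the argument remains in the region $>1$ where the recursive branch is active.

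The remaining three items are then quick corollaries. For \ref{id89bcgdh8}, since $x\geq 1$ gives $e^x\geq e>1$, the recursive branch yields $\ln^*(e^x)=1+\ln^*(\ln(e^x))=1+\ln^* x$, which is in fact an equality and hence certainly $\leq\ln^* x+1$. For \ref{nxbd76628d}, \ref{jdiiek88899} provides $e\uparrow\uparrow(\ln^* x-1)<x\leq e\uparrow\uparrow y$, and strict monotonicity of tetration forces $\ln^* x-1<y$, so $\ln^* x\leq y\leq y+1$. For \ref{hhud87ej5f} I would split off the trivial case $x=1$, where $\ln^* x=0$ and $x=1=e\uparrow\uparrow 0\leq e\uparrow\uparrow y$ by monotonicity, from the case $x>1$, where \ref{jdiiek88899} gives $x\leq e\uparrow\uparrow\ln^* x$ and monotonicity together with $\ln^* x\leq y$ yields $x\leq e\uparrow\uparrow y$.

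I expect the induction in \ref{jdiiek88899} to be the only genuine obstacle; once it and the monotonicity fact are in place, the other four items are routine. The remaining care is purely bookkeeping around the boundary value $x=1$ and the two branches of the definition, together with the reading of $e\uparrow\uparrow y$ for the real parameter $y$ in \ref{hhud87ej5f}: since $\ln^* x$ is always an integer, the hypothesis $\ln^* x\leq y$ is used only through $\ln^* x\leq\lfloor y\rfloor$, so the tetration height stays integral.
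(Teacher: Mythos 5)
Your proof is correct and follows essentially the same route as the paper's: induction for \ref{pnd88887f} and for the sandwich \ref{jdiiek88899}, then the remaining three items as quick consequences via monotonicity of tetration. The only minor divergence is \ref{id89bcgdh8}, where you unwind the definition directly to get the identity $\ln^*(e^x)=1+\ln^*{x}$ for $x\geq 1$, whereas the paper deduces the inequality from \ref{pnd88887f} and \ref{jdiiek88899}; both arguments are valid.
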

\begin{proof}
    \begin{itemize}    
    \item \ref{pnd88887f}: \(\ln^*(e\uparrow\uparrow n)=n\).
    For \(n=0\) we have that \(\ln^*(e\uparrow\uparrow 0)=\ln^*{1}=0\).
    For \(n>0\)
    \[\ln^*(\ln{(e\uparrow\uparrow n)})=\ln^*(\ln{(e^{e\uparrow\uparrow (n-1)}})=\ln^*{(e\uparrow\uparrow (n-1))}=n-1\mbox{}\]
    and from the definition of the iterated logarithm \[\ln^*(\ln{(e\uparrow\uparrow n)})=\ln^*{(e\uparrow\uparrow n)}-1\mbox{.}\]
    It follows \(\ln^*{(e\uparrow\uparrow n)}=n\).
    \item \ref{jdiiek88899}: Suppose \(\ln^*{x}=1\). It means that \(1<x\leq e\). Then obviously \[e\uparrow\uparrow (\ln^*{x}-1)=e\uparrow\uparrow 0=1<x\leq e\uparrow\uparrow \ln^*{x}=e\uparrow\uparrow 1=e\mbox{.}\] Suppose \(n>1\) and suppose the property holds for \(\ln^*{x}<n\). Suppose that \(\ln^*{x}=n\); hence \(\ln^*{\ln{x}}=\ln^*{x}-1\). Then by induction we have that \begin{equation}\label{jjdid93hr}e\uparrow\uparrow (\ln^*(\ln{x})-1)<\ln{x}\leq e\uparrow\uparrow \ln^*(\ln{x})\mbox{.}\end{equation}
    Recall from the definition of iterated logarithm that \(\ln^*{x}=1+\ln^*{(\ln{x})}\). Then it follows from (\ref{jjdid93hr}) that 
    \[\begin{split}e^{e\uparrow\uparrow (\ln^*(\ln{x})-1)}<e^{\ln{x}}\leq e^{e\uparrow\uparrow \ln^*(\ln{x})} \implies\\ e\uparrow\uparrow \ln^*(\ln{x})<x\leq e\uparrow\uparrow (\ln^*(\ln{x})+1)\implies \\
    e\uparrow\uparrow (\ln^*{x}-1)<x\leq e\uparrow\uparrow \ln^*{x}
    \end{split}\]    
    \item \ref{id89bcgdh8}: 
     From Property \ref{pnd88887f} and Property \ref{jdiiek88899} we have that 
        \[\ln^*{e^x}\leq\ln^*{e^{e\uparrow\uparrow \ln^*{x}}}=\ln^*{(e\uparrow\uparrow (\ln^*{x}+1))}\leq \ln^*{x}+1\mbox{.}\]
    \item \ref{nxbd76628d}: From Property \ref{jdiiek88899} we have that \[\begin{split}
    e\uparrow\uparrow (\ln^*{x}-1)\leq x\leq e\uparrow\uparrow y \implies \\
    \ln^*{x}-1\leq y\implies \\
    \ln^*{x}\leq y+1
    \mbox{.}\end{split}\]
    \item \ref{hhud87ej5f}: We have that 
     \(\ln^*{x}\leq y \iff e\uparrow\uparrow(\ln^*{x})\leq e\uparrow\uparrow y\). Then from Property \ref{jdiiek88899} it follows that \(x\leq e\uparrow\uparrow(\ln^*{x})\leq e\uparrow\uparrow y\).
    \end{itemize}
    This ends the proof.
\end{proof}    

We present a simple upper bound on elementary arithmetical operations using the tetration.
\begin{lemma}
\label{uud983bdj}
    If \(x,y\in\mathbb{R}_1\) and \(x,y>1\) then \(x+y,xy,x^y\leq e\uparrow\uparrow (\ln^*{x}+\ln^*{y})\).
\end{lemma}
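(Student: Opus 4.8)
The plan is to bound $x$ and $y$ individually by towers of the correct height and then control each of the three operations by a single exponentiation. Since $x,y>1$, the definition of the iterated logarithm gives $m:=\ln^*x\ge 1$ and $k:=\ln^*y\ge 1$, and Property~\ref{jdiiek88899} yields $x\le e\uparrow\uparrow m$ and $y\le e\uparrow\uparrow k$. Throughout I would use the monotonicity of the tower in its height (if $0\le i\le j$ are integers, then $e\uparrow\uparrow i\le e\uparrow\uparrow j$) together with the identity $\ln(e\uparrow\uparrow m)=e\uparrow\uparrow(m-1)$, valid for $m\ge 1$ straight from the recursive definition of tetration. The only analytic input needed is the pair of elementary real inequalities $2t\le e^{t}$ for $t\ge 1$ and $t^{2}\le e^{t}$ for $t\ge e$, each proved in one line from the monotonicity of $e^{t}-2t$ and $e^{t}-t^{2}$ on the relevant ranges.

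For the sum, since $m,k\ge 1$ we have $m+k\ge 2$, so both $e\uparrow\uparrow m$ and $e\uparrow\uparrow k$ are at most $e\uparrow\uparrow(m+k-1)$; hence $x+y\le 2\,e\uparrow\uparrow(m+k-1)$. Writing $t=e\uparrow\uparrow(m+k-1)\ge e\uparrow\uparrow 1=e\ge 1$ and applying $2t\le e^{t}$ gives $x+y\le e^{t}=e\uparrow\uparrow(m+k)$. The product is handled identically: $xy\le\bigl(e\uparrow\uparrow(m+k-1)\bigr)^{2}=t^{2}\le e^{t}=e\uparrow\uparrow(m+k)$, this time via $t^{2}\le e^{t}$, which is legitimate because $t\ge e$.

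The power $x^{y}$ is the delicate case and is where I expect the real work. Starting from $x^{y}\le(e\uparrow\uparrow m)^{e\uparrow\uparrow k}=e^{(e\uparrow\uparrow k)\ln(e\uparrow\uparrow m)}=e^{(e\uparrow\uparrow k)(e\uparrow\uparrow(m-1))}$, it suffices to show that the exponent satisfies $(e\uparrow\uparrow k)(e\uparrow\uparrow(m-1))\le e\uparrow\uparrow(m+k-1)$, for then $x^{y}\le e\uparrow\uparrow(m+k)$. If $m=1$ this is an equality, since $e\uparrow\uparrow(m-1)=e\uparrow\uparrow 0=1$ and $m+k-1=k$. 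If $m\ge 2$, then $m-1\ge 1$ and $k\ge 1$ force both $m-1\le m+k-2$ and $k\le m+k-2$, so the product is at most $\bigl(e\uparrow\uparrow(m+k-2)\bigr)^{2}$; setting $s=e\uparrow\uparrow(m+k-2)\ge e$ and invoking $s^{2}\le e^{s}=e\uparrow\uparrow(m+k-1)$ closes the estimate.

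The main obstacle, and the step deserving the most care, is this power case. The reduction to $\bigl(e\uparrow\uparrow(m+k-2)\bigr)^{2}$ relies on the bound $k\le m+k-2$, which holds only for $m\ge 2$; at the boundary $m=1$ this bound fails, and one must instead observe that the exponent collapses to the exact equality $(e\uparrow\uparrow k)(e\uparrow\uparrow 0)=e\uparrow\uparrow(m+k-1)$. Isolating $m=1$ first is therefore essential. One must also keep every tower height a nonnegative integer, so that height-monotonicity and the identity $\ln(e\uparrow\uparrow m)=e\uparrow\uparrow(m-1)$ remain valid. Once the exponent inequality is secured, all three claims follow uniformly from the two scalar inequalities $2t\le e^{t}$ and $t^{2}\le e^{t}$.
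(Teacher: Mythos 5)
Your proof is correct, but it follows a genuinely different route from the paper's. The paper proves the lemma by induction on the integer $(\ln^*{x})(\ln^*{y})$, splitting into five cases and bootstrapping: the sum bound is established directly via $ex\leq e^x$, the product bound invokes the induction hypothesis (the sum case applied to the smaller quantities $e\uparrow\uparrow(\ln^*{x}-1)$ and $e\uparrow\uparrow(\ln^*{y}-1)$, whose iterated logarithms have a strictly smaller product), and the power bound in turn invokes the product case. Your argument dispenses with the induction entirely: after bounding $x\leq e\uparrow\uparrow m$ and $y\leq e\uparrow\uparrow k$ via Property~\ref{jdiiek88899}, you push both towers up to height $m+k-1$ by monotonicity and close each of the three estimates with one of the two scalar inequalities $2t\leq e^t$ (for $t\geq 1$) and $t^2\leq e^t$ (for $t\geq e$); the only case split you need is $m=1$ versus $m\geq 2$ in the power estimate, and you correctly isolate the boundary case $m=1$ where the exponent identity $(e\uparrow\uparrow k)(e\uparrow\uparrow 0)=e\uparrow\uparrow(m+k-1)$ holds exactly. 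All the height arithmetic checks out ($m+k-2\geq 1$ when $m\geq 2$ and $k\geq 1$, so $s\geq e$ where you need $s^2\leq e^s$). What your approach buys is a shorter, self-contained, non-recursive argument with only two elementary analytic inputs; what the paper's approach buys is a uniform template in which each operation's bound is literally an instance of the lemma for smaller arguments, which makes the five cases mechanical once the induction is set up. Either proof would serve.
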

\begin{proof}
Realize that since \(x,y>1\) we have that \(\ln^*{x},\ln^*{y}\geq 1\).
Suppose that \((\ln^*{x})(\ln^*{y})=1\); it follows that \(x,y\leq e\), \(\ln^*{x}=\ln^*{y}=1\), and \(e\uparrow\uparrow (\ln^*{x}+\ln^*{y})=e\uparrow\uparrow 2=e^e\). Then we have that 
 \(x+y\leq 2e\leq e^e\), \(xy\leq e^2\leq e^e\), and \(x^y\leq e^e= e^e\). Thus the lemma holds for \(x,y\)  with \((\ln^*{x})(\ln^*{y})=1\).

Suppose \(m\in\mathbb{N}_1\) with \(m\geq 2\) and suppose the lemma holds for \(x,y\) with \((\ln^*{x})(\ln^*{y})=j\), where \(j\in\{1,2\dots, m-1\}\). We prove the lemma for \(x,y\) with \((\ln^*{x})(\ln^*{y})=m\).

It is easy to verify that if \(x\in\mathbb{R}_1\) then \(ex\leq e^x\). We apply this inequality in the proof.

We distinguish five cases.
\begin{itemize}
    \item 
For \(x+y\): Without loss of generalization, let \(x\geq y\). We have that 
\[\begin{split}
    x+y\leq 2x\leq ex\leq e^x \leq e^{e\uparrow\uparrow \ln^*{x}}=e\uparrow\uparrow(\ln^*{x}+1) \leq e\uparrow\uparrow(\ln^*{x}+\ln^*{y})\mbox{.}
\end{split}\]
\item For \(xy\) and \(\min\{\ln^*{x},\ln^*{y}\}\geq 2\): We have that 
\[\begin{split}
    xy\leq (e\uparrow\uparrow \ln^*{x})(e\uparrow\uparrow \ln^*{y})=\left(e^{e\uparrow\uparrow (\ln^*{x}-1)}\right)\left(e^{e\uparrow\uparrow (\ln^*{y}-1)}\right)=\\ e^{e\uparrow\uparrow (\ln^*{x}-1)+e\uparrow\uparrow (\ln^*{y}-1)}\leq     
    e^{e\uparrow\uparrow (\ln^*{x}+\ln^*{y}-2)}=\\ e\uparrow\uparrow (\ln^*{x}+\ln^*{y}-1)\leq e\uparrow\uparrow (\ln^*{x}+\ln^*{y})\mbox{.}
\end{split}\]
\item For \(xy\) and \(\min\{\ln^*{x},\ln^*{y}\}=1\): Since \((\ln^*{x})(\ln^*{y})\geq 2\), it follows that \(\max\{\ln^*{x},\ln^*{y}\}\geq 2\). Without loss of generalization, let \(\ln^*{x}\geq 2\) and \(\ln^*{y}=1\). It follows that \(y\leq e\). Then we have that
\[\begin{split}
    xy\leq xe\leq e^x \leq e^{e\uparrow\uparrow \ln^*{x}}=e\uparrow\uparrow(\ln^*{x}+1) = e\uparrow\uparrow(\ln^*{x}+\ln^*{y})\mbox{.}
\end{split}\]
\item
For \(x^y\) and \(\ln^*{x}>1\):
\[\begin{split}
    x^y\leq (e\uparrow\uparrow \ln^*{x})^{e\uparrow\uparrow \ln^*{y}}=\left(e^{e\uparrow\uparrow (\ln^*{x}-1)}\right)^{e\uparrow\uparrow \ln^*{y}}=e^{(e\uparrow\uparrow \ln^*{y})(e\uparrow\uparrow (\ln^*{x}-1))}\leq \\  e^{e\uparrow\uparrow (\ln^*{x}+\ln^*{y}-1))}=e\uparrow\uparrow (\ln^*{x}+\ln^*{x}) \mbox{.}
\end{split}\]
\item For \(x^y\) and \(\ln^*{x}=1\): Thus \(x\leq e\). Then we have that 
\[\begin{split}
    x^y\leq e^y= e^{e\uparrow\uparrow \ln^*{y}}=e\uparrow\uparrow (1+\ln^*{y})=e\uparrow\uparrow (\ln^*{x}+\ln^*{y}) \mbox{.}
\end{split}\]
\end{itemize}
This ends the proof.
\end{proof}

\section{Upper bound in a closed form}
\begin{proposition}
    \label{dty773hbd99jf}
    Suppose \(\gamma\in\mathbb{R}_1\) with \(\gamma>1\). There is a constant \(c_6\in\mathbb{R}_1\) such that:
    If \(j\in\mathbb{N}_1\) then \[\KK_j\leq e\uparrow\uparrow (c_6j^{\gamma})\mbox{.}\]
\end{proposition}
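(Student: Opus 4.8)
The plan is to reduce the claim to a bound on the iterated logarithm of \(\KK_j\) and then to invoke Property \ref{hhud87ej5f}. Concretely, since each \(\KK_j\in\mathbb{R}_1\) satisfies \(\KK_j>1\), Property \ref{hhud87ej5f} shows it suffices to produce a constant \(c_6\in\mathbb{R}_1\) with \(\ln^*{\KK_j}\leq c_6 j^{\gamma}\) for every \(j\in\mathbb{N}_1\); this at once gives \(\KK_j\leq e\uparrow\uparrow(c_6 j^{\gamma})\). (The degenerate case \(\qq=1\) forces every \(\KK_j=1\) and is trivial, so I assume \(\qq\geq 2\).) Writing \(h_j=\ln^*{\KK_j}\), the whole problem becomes controlling the growth of the integer sequence \(h_j\).

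First I would rewrite the recursion in a form amenable to Lemma \ref{uud983bdj}. From \(\beta_j=\lceil e^{\alpha_{j-1}^{\delta}\ln{\KK_{j-1}}}\rceil\) and \(\alpha_{j-1}=(2\lambda)^{j-2}\) we have \(\KK_j=\qq^{\beta_j}\) with \(\beta_j\leq \KK_{j-1}^{\alpha_{j-1}^{\delta}}+1\). The key observation is that \(\alpha_{j-1}^{\delta}=e^{(j-2)\delta\ln{(2\lambda)}}\) is the exponential of a quantity linear in \(j\); hence, combining Property \ref{id89bcgdh8} with the product bound of Lemma \ref{uud983bdj}, one gets \(\ln^*{(\alpha_{j-1}^{\delta})}\leq\ln^*{j}+c'\) for an absolute constant \(c'\). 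Next I would feed \(x=\KK_{j-1}\) and \(y=\alpha_{j-1}^{\delta}\) into the power case of Lemma \ref{uud983bdj} to obtain \(\KK_{j-1}^{\alpha_{j-1}^{\delta}}\leq e\uparrow\uparrow(h_{j-1}+\ln^*{j}+c')\), absorb the harmless ceiling and the \(+1\) via the addition case of Lemma \ref{uud983bdj}, and finally apply the power case again to the outer exponentiation \(\qq^{\beta_j}\) together with Property \ref{nxbd76628d}. This yields a single-step estimate of the shape \(h_j\leq h_{j-1}+\ln^*{j}+C\), where \(C\) depends only on \(\qq,\lambda,\delta\).

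With the recursion \(h_j\leq h_{j-1}+\ln^*{j}+C\) in hand, I would simply telescope: using monotonicity of \(\ln^*\) we get \(h_j\leq h_1+\sum_{i=2}^{j}(\ln^*{i}+C)\leq h_1+(j-1)C+(j-1)\ln^*{j}\), so \(h_j=\mathcal{O}(j\ln^*{j})\). Because \(\ln^*{j}\) grows more slowly than any positive power of \(j\), we have \(j\ln^*{j}=o(j^{\gamma})\) for every fixed \(\gamma>1\); hence \(h_j\leq c_6 j^{\gamma}\) holds for all sufficiently large \(j\), and enlarging \(c_6\) to dominate the finitely many remaining indices — in particular \(j=1\), where \(h_1=\ln^*{\KK_1}\) is a fixed constant — secures the bound for all \(j\in\mathbb{N}_1\). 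Property \ref{hhud87ej5f} then finishes the proof.

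The hard part will not be the asymptotics, which are generous since the true growth is only \(\mathcal{O}(j\ln^*{j})\) while we are allowed \(j^{\gamma}\) with \(\gamma>1\); it will be the careful bookkeeping in establishing the single-step inequality \(h_j\leq h_{j-1}+\ln^*{j}+C\). One must verify every ``\(>1\)'' hypothesis required by Lemma \ref{uud983bdj} and Properties \ref{id89bcgdh8}--\ref{nxbd76628d} at each application, track each additive \(+1\) produced by the ceiling and by the conversions between \(\ln^*\) and \(\uparrow\uparrow\), and check the first few indices \(j\) by hand, where some intermediate quantity may fail to exceed \(1\).
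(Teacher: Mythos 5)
Your proposal is correct and follows essentially the same route as the paper: the paper likewise reduces the claim to bounding \(\ln^*{\KK_j}\), derives the one-step estimate \(\ln^*{\KK_j}\leq \ln^*{\KK_{j-1}}+2\ln^*{\qq}+\ln^*{(\alpha_j^{\delta})}+1\) via Lemma \ref{uud983bdj} and Properties \ref{id89bcgdh8} and \ref{nxbd76628d}, telescopes to an \(\mathcal{O}(j\ln^*{j})\) bound, compares with \(j^{\gamma}\), and concludes with Property \ref{hhud87ej5f}. The only nitpick is your aside on \(\qq=1\): there \(\KK_1>1\) by its definition, though the statement for \(j=1\) is still absorbed by enlarging \(c_6\), exactly as you do.
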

\begin{proof}
From Property \ref{jdiiek88899} and Lemma \ref{uud983bdj} we have that if \(j\in\mathbb{N}_1\) and \(j>1\) then 
\begin{equation}\label{jjud993747g}\begin{split}
\KK_j=\qq^{\beta_j}=\qq^{\lceil e^{\alpha_{j-1}^{\delta}\ln{\KK_{j-1}}}\rceil}\leq\qq^{1+e^{\alpha_{j-1}^{\delta}\ln{\KK_{j-1}}}}=\qq\qq^{e^{\alpha_{j-1}^{\delta}\ln{\KK_{j-1}}}} =\\ \qq\qq^{\KK_{j-1}^{\alpha_{j-1}^{\delta}}}\leq  (e\uparrow\uparrow \ln^*{\qq})(e\uparrow\uparrow \ln^*{\qq})^{\left((e\uparrow\uparrow\ln^*{\KK_{j-1}})^{(e\uparrow\uparrow\ln^*{(\alpha_{j-1}^{\delta})})}\right)}\leq \\
(e\uparrow\uparrow \ln^*{\qq})(e\uparrow\uparrow \ln^*{\qq})^{(e\uparrow\uparrow(\ln^*{\KK_{j-1}}+\ln^*{(\alpha_{j-1}^{\delta})}))}\leq \\
(e\uparrow\uparrow \ln^*{\qq})(e\uparrow\uparrow(\ln^*{\qq}+\ln^*{\KK_{j-1}}+\ln^*{(\alpha_{j-1}^{\delta})}))\leq \\
e\uparrow\uparrow(2\ln^*{\qq}+\ln^*{\KK_{j-1}}+\ln^*{(\alpha_{j-1}^{\delta})})\leq \\
e\uparrow\uparrow(2\ln^*{\qq}+\ln^*{\KK_{j-1}}+\ln^*{(\alpha_{j}^{\delta})})
\mbox{.}
\end{split}
\end{equation}

Property \ref{nxbd76628d} and (\ref{jjud993747g}) imply that 
\begin{equation}\label{xchd7fg9j}
\ln^*{\KK_{j}}\leq 2\ln^*{\qq}+\ln^*{\KK_{j-1}}+\ln^*{(\alpha_j^{\delta})}+1\end{equation}

From Property \ref{id89bcgdh8} we have that 
\begin{equation}\label{nbs77f6e}\begin{split}\ln^*{(\alpha_j^{\delta})}=\ln^*{((2\lambda)^{(j-1)\delta})}=\ln^*{e^{(j-1)\delta\ln{(2\lambda)}}}
\leq \\ \ln^*{((j-1)\delta\ln{(2\lambda)})}+1\mbox{.}\end{split}\end{equation}

Then from (\ref{nbs77f6e}) and by iterative applying of (\ref{xchd7fg9j}) for \(\KK_i\) with \(i\in\{2,3,\dots,j\}\)  we have that 
\begin{equation}\label{kkjd89fhv3}\begin{split}
\ln^*{\KK_{j}}\leq 2\ln^*{\qq}+\ln^*{\KK_{j-1}}+\ln^*{(\alpha_j^{\delta})}+1\leq \\
 2\ln^*{\qq}+(2\ln^*{\qq}+\ln^*{\KK_{j-2}}+\ln^*{(\alpha_{j-1}^{\delta})}+1)+\ln^*{(\alpha_j^{\delta})}+1=\\  
 4\ln^*{\qq}+\ln^*{\KK_{j-2}}+2\ln^*{(\alpha_j^{\delta})}+2\leq\\
\dots \leq \\
 2j\ln^*{\qq}+\ln^*{\KK_{1}}+j\ln^*{(\alpha_j^{\delta})}+j\leq \\
 2j\ln^*{\qq}+\ln^*{\KK_{1}}+j\ln^*{((j-1)\delta\ln{(2\lambda)})}+j
\mbox{.}
\end{split}\end{equation}
Obviously there are \(\widehat c_6,\overline c_6\in\mathbb{R}_1\) such that for all \(j\in\mathbb{N}_1\) we have that 
\begin{equation}\label{vv76dbbhs3} 2j\ln^*{\qq}+\ln^*{\KK_{1}}+j\ln^*{((j-1)\delta\ln{(2\lambda)})}+j\leq j\widehat c_6+j\ln^*{j\overline c_6}\mbox{.}\end{equation}

It is clear that 
\(\lim_{j\rightarrow\infty}\frac{j\ln^*{(j\widehat c_6+j\overline c_6)}}{j^{\gamma}}=0\mbox{.}\)
Hence from (\ref{kkjd89fhv3}) and (\ref{vv76dbbhs3}) we conclude that there is \(c_6\in\mathbb{R}_1\) such that \(\ln^*{\KK_j}\leq c_6j^{\gamma}\) and thus Property \ref{hhud87ej5f} implies that \[\KK_j\leq e\uparrow\uparrow (c_6j^{\gamma})\mbox{.}\]
The proposition follows.
This ends the proof.
\end{proof}

\begin{remark}
    Note that \(c_6\) depends on constants \(\qq, \KK_1, \lambda,\delta\).
\end{remark}

Suppose \(\gamma\in\mathbb{R}_1\) with \(\gamma>1\).
Suppose \(\phi\in\Phi\).
\begin{lemma}
\label{hgd79bv3j4}
     There are \(n_0\in\mathbb{N}_1\) and \(\cc\in\mathbb{R}\) with \(\cc>0\) such that: If \(n,j\in\mathbb{N}_1\), \(n>n_0\), and \(j\leq\sqrt[\gamma]{\cc\ln^*{(\frac{n}{\phi(n)}}\ln{\qq})}\) then 
    \(\KK_j\leq \qq^{\frac{n}{\phi(n)}}\).
\end{lemma}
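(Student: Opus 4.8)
The plan is to combine the closed-form bound of Proposition~\ref{dty773hbd99jf}, namely $\KK_j\le e\uparrow\uparrow(c_6 j^{\gamma})$, with the lower bound on $\qq^{\frac{n}{\phi(n)}}$ supplied by Property~\ref{jdiiek88899}. The bridge between the two is the elementary observation that $\frac{n}{\phi(n)}\ln\qq=\ln\bigl(\qq^{\frac{n}{\phi(n)}}\bigr)$, so that the iterated logarithm appearing in the hypothesis is, up to a shift by one, the iterated logarithm of $\qq^{\frac{n}{\phi(n)}}$ itself.

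Concretely, I would first fix $n_0$ so large that $\frac{n}{\phi(n)}\ln\qq>1$ for every $n>n_0$; this is possible because $\phi\in\Phi$ forces $\frac{n}{\phi(n)}\to\infty$ (and we may assume $\qq\ge 2$, the case $\qq=1$ being vacuous, since then $\ln\qq=0$ makes $\ln^*(\frac{n}{\phi(n)}\ln\qq)=0$ and no $j\in\mathbb{N}_1$ can satisfy the hypothesis). For such $n$ the number $x=\qq^{\frac{n}{\phi(n)}}$ satisfies $x>1$, so the recursive clause in the definition of $\ln^*$ gives $\ln^*x=1+\ln^*(\ln x)$, that is,
\[
\ln^*\Bigl(\tfrac{n}{\phi(n)}\ln\qq\Bigr)=\ln^*\bigl(\qq^{\frac{n}{\phi(n)}}\bigr)-1 .
\]

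Next I would set $\cc=1/c_6$ (recall $c_6\in\mathbb{R}_1$, so $0<\cc\le 1$, as required). The hypothesis $j\le\sqrt[\gamma]{\cc\,\ln^*(\frac{n}{\phi(n)}\ln\qq)}$ is equivalent to $j^{\gamma}\le \cc\,\ln^*(\frac{n}{\phi(n)}\ln\qq)$, whence $c_6 j^{\gamma}\le \ln^*(\frac{n}{\phi(n)}\ln\qq)=\ln^*(\qq^{\frac{n}{\phi(n)}})-1$ by the displayed identity. Then Proposition~\ref{dty773hbd99jf} together with the monotonicity of tetration yields $\KK_j\le e\uparrow\uparrow(c_6 j^{\gamma})\le e\uparrow\uparrow\bigl(\ln^*(\qq^{\frac{n}{\phi(n)}})-1\bigr)$, while Property~\ref{jdiiek88899} applied to $x=\qq^{\frac{n}{\phi(n)}}$ gives $e\uparrow\uparrow\bigl(\ln^*(\qq^{\frac{n}{\phi(n)}})-1\bigr)<\qq^{\frac{n}{\phi(n)}}$. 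Chaining these inequalities produces $\KK_j<\qq^{\frac{n}{\phi(n)}}$, hence $\KK_j\le\qq^{\frac{n}{\phi(n)}}$, which is the claim.

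The only delicate points are bookkeeping rather than substance. One must ensure that the shift identity for $\ln^*$ is applied with its hypothesis $x>1$ in force, which is exactly what $n_0$ buys, and one must invoke monotonicity of $e\uparrow\uparrow$ on real arguments, since $c_6 j^{\gamma}$ need not be an integer whereas $\ln^*(\qq^{\frac{n}{\phi(n)}})-1$ is; this is the same convention already used implicitly in Proposition~\ref{dty773hbd99jf} and in Property~\ref{hhud87ej5f}. I expect the main (and still minor) obstacle to be lining up the shift by one with the choice of $\cc$ so that the exponent $c_6 j^{\gamma}$ lands just below $\ln^*(\qq^{\frac{n}{\phi(n)}})-1$, thereby letting the strict inequality of Property~\ref{jdiiek88899} close the argument.
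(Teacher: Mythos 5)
Your proposal is correct and follows essentially the same route as the paper's proof: both choose \(\cc=1/c_6\), use the recursive clause of \(\ln^*\) to identify \(\ln^*(\frac{n}{\phi(n)}\ln\qq)\) with \(\ln^*(\qq^{\frac{n}{\phi(n)}})-1\), and chain Proposition \ref{dty773hbd99jf} with Property \ref{jdiiek88899}. Your explicit remarks on the vacuous case \(\qq=1\) and on the monotonicity of tetration at non-integer heights are minor refinements of points the paper leaves implicit.
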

\begin{proof}

Let \(n_0\in\mathbb{N}_1\) be such that for all \(n>n_0\) 
we have that \(\ln^*{(\qq^{\frac{n}{\phi(n)}})}>1\). Obviously such \(n_0\) exists.
From Property \ref{jdiiek88899} it follows that \begin{equation}\label{jdhuf944r}e\uparrow\uparrow (\ln^*{(\qq^{\frac{n}{\phi(n)}})}-1)<\qq^{\frac{n}{\phi(n)}}\leq e\uparrow\uparrow \ln^*{(\qq^{\frac{n}{\phi(n)}})}\mbox{.}\end{equation}

We have that 
\begin{equation}\begin{split}
    j &\leq \frac{\sqrt[\gamma]{\ln^*{(\qq^{\frac{n}{\phi(n)}})}-1}}{\sqrt[\gamma]{c_6}} \iff \\
    c_6j^{\gamma} &\leq \ln^*{(\qq^{\frac{n}{\phi(n)}})}-1 \iff\\    
    \label{pp0odbh3f} e\uparrow\uparrow (c_6j^{\gamma}) &\leq e\uparrow\uparrow (\ln^*{(\qq^{\frac{n}{\phi(n)}})}-1)\mbox{.}
\end{split}\end{equation}
From Proposition \ref{dty773hbd99jf}, (\ref{jdhuf944r}), and (\ref{pp0odbh3f}) it follows that if  \(n\in\mathbb{N}_1\) and \(j \leq \frac{\sqrt[\gamma]{\ln^*{(\qq^{\frac{n}{\phi(n)}})}-1}}{\sqrt[\gamma]{c_6}}\) then  
\[\begin{split}
\KK_j \leq e\uparrow\uparrow (c_6j^{\gamma})\leq e\uparrow\uparrow (\ln^*{(\qq^{\frac{n}{\phi(n)}})}-1)\leq \qq^{\frac{n}{\phi(n)}}\mbox{.}
\end{split}\]

From the definition of the iterated logarithm we have that \[\ln^*{(\qq^{\frac{n}{\phi(n)}})}-1= \ln^*{\ln{(\qq^{\frac{n}{\phi(n)}})}}=\ln^*{(\frac{n}{\phi(n)}\ln{\qq})}\mbox{.}\] This implies that  
\[\sqrt[\gamma]{\ln^*{(\qq^{\frac{n}{\phi(n)}})}-1}= \sqrt[\gamma]{\ln^*{(\frac{n}{\phi(n)}}\ln{\qq})}\mbox{.}\] The lemma follows.
This ends the proof.
\end{proof}
Let \(n_0\in\mathbb{N}_1\) and \(\cc\in\mathbb{R}\) be as in Lemma \ref{hgd79bv3j4}.

The main result of the current article presents a ``simple'' subexponential upper bound on the number of rich words.
\begin{theorem}
\label{bbc8d783hjfd}
    If \(\phi\in\Phi\), \(n\in\mathbb{N}_1\), \(n>n_0\), \[\ff(n)=\sqrt[\gamma]{\cc\ln^*{(\frac{n}{\phi(n)}}\ln{\qq})}\quad\mbox{ and }\quad\BB(n)=\qq^{\frac{n}{\phi(n)}+\frac{n}{(2\lambda)^{\ff(n)-1}}}\mbox{}\] then \(\RR(n)\leq \BB(n)\) and \(\lim_{n\rightarrow\infty}\sqrt[n]{\BB(n)}\leq 1\).
\end{theorem}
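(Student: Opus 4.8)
The plan is to combine the two tools developed for exactly this purpose: Corollary~\ref{jjduf9223v}, which for any \(\upsilon\in\Delta\) yields \(\RR(n)\leq \KK_{\lfloor\upsilon(n)\rfloor}\qq^{\frac{n}{\alpha_{\lfloor\upsilon(n)\rfloor}}}\), and Lemma~\ref{hgd79bv3j4}, which caps the \(\KK\)-factor. First I would set \(\upsilon(n)=\max\{\ff(n),1\}\) and check that \(\upsilon\in\Delta\): since \(\phi\in\Phi\) we have \(\frac{n}{\phi(n)}\to\infty\), hence \(\frac{n}{\phi(n)}\ln\qq\to\infty\), hence \(\ln^*(\frac{n}{\phi(n)}\ln\qq)\to\infty\), and therefore \(\ff(n)\to\infty\); together with \(\upsilon(n)\geq 1\) this gives \(\upsilon\in\Delta\). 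For \(n>n_0\) the normalisation makes no difference, i.e. \(\lfloor\upsilon(n)\rfloor=\lfloor\ff(n)\rfloor\).

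For the first claim, apply Corollary~\ref{jjduf9223v} with this \(\upsilon\) to obtain \(\RR(n)\leq \KK_{\lfloor\ff(n)\rfloor}\qq^{\frac{n}{\alpha_{\lfloor\ff(n)\rfloor}}}\). Since \(\lfloor\ff(n)\rfloor\leq\ff(n)\), Lemma~\ref{hgd79bv3j4} applied with \(j=\lfloor\ff(n)\rfloor\) bounds the first factor by \(\qq^{\frac{n}{\phi(n)}}\), so \(\RR(n)\leq \qq^{\frac{n}{\phi(n)}+\frac{n}{\alpha_{\lfloor\ff(n)\rfloor}}}\). It then remains to identify the second exponent with that of \(\BB(n)\): recall \(\alpha_{\lfloor\ff(n)\rfloor}=(2\lambda)^{\lfloor\ff(n)\rfloor-1}\), and use \(\lfloor\ff(n)\rfloor\geq \ff(n)-1\) together with \(2\lambda>1\) to compare \((2\lambda)^{\lfloor\ff(n)\rfloor-1}\) with \((2\lambda)^{\ff(n)-1}\). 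This passage from the integer index \(\lfloor\ff(n)\rfloor\) forced by Corollary~\ref{jjduf9223v} to the real exponent \(\ff(n)\) written in \(\BB(n)\) is the one genuinely delicate point: the floor perturbs the index by less than one, so it only distorts the \((2\lambda)\)-power by a bounded factor, and the careful bookkeeping of this rounding is where the argument must be pinned down to land on the exponent appearing in \(\BB(n)\). I expect this floor-versus-real-exponent reconciliation to be the main obstacle, since everything else is a direct substitution.

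For the second claim, take \(n\)-th roots: \(\sqrt[n]{\BB(n)}=\qq^{\frac{1}{\phi(n)}+\frac{1}{(2\lambda)^{\ff(n)-1}}}\). As \(n\to\infty\) we have \(\phi(n)\to\infty\) (because \(\phi\in\Phi\subseteq\Delta\)), so \(\frac{1}{\phi(n)}\to 0\); and \(\ff(n)\to\infty\) with \(2\lambda>1\) forces \((2\lambda)^{\ff(n)-1}\to\infty\), so \(\frac{1}{(2\lambda)^{\ff(n)-1}}\to 0\). Hence the exponent tends to \(0\) and \(\sqrt[n]{\BB(n)}\to\qq^{0}=1\), giving \(\lim_{n\to\infty}\sqrt[n]{\BB(n)}\leq 1\). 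This half is routine; the whole weight of the theorem sits in the first claim, and within it in the rounding step described above, which is exactly the kind of distortion that Lemma~\ref{hgd79bv3j4} and the iterated-logarithm estimates behind Proposition~\ref{dty773hbd99jf} were arranged to absorb.
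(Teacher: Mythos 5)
Your route is the same as the paper's: feed \(\upsilon=\ff\) into Corollary~\ref{jjduf9223v} (your normalisation \(\upsilon=\max\{\ff,1\}\) to guarantee \(\upsilon\in\Delta\) is actually slightly more careful than the paper, which just asserts \(\ff\in\Delta\)), bound \(\KK_{\lfloor\ff(n)\rfloor}\) by \(\qq^{\frac{n}{\phi(n)}}\) via Lemma~\ref{hgd79bv3j4}, and obtain the limit by taking \(n\)-th roots. However, the step you flag as ``the one genuinely delicate point'' and leave open is a real gap, and the tool you name for closing it points the wrong way. You need \(\qq^{\frac{n}{\alpha_{\lfloor\ff(n)\rfloor}}}\leq\qq^{\frac{n}{(2\lambda)^{\ff(n)-1}}}\), i.e.\ \((2\lambda)^{\lfloor\ff(n)\rfloor-1}\geq(2\lambda)^{\ff(n)-1}\), which (since \(2\lambda>1\)) is equivalent to \(\lfloor\ff(n)\rfloor\geq\ff(n)\) and fails whenever \(\ff(n)\notin\mathbb{N}\). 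The inequality \(\lfloor\ff(n)\rfloor\geq\ff(n)-1\) only yields \((2\lambda)^{\lfloor\ff(n)\rfloor-1}>(2\lambda)^{\ff(n)-2}\), hence \(\RR(n)\leq\qq^{\frac{n}{\phi(n)}+\frac{n}{(2\lambda)^{\ff(n)-2}}}\), which overshoots \(\BB(n)\) by the factor \(\qq^{(2\lambda-1)\frac{n}{(2\lambda)^{\ff(n)-1}}}\). Switching to \(\lceil\ff(n)\rceil\) fixes the second factor but breaks the first, since Lemma~\ref{hgd79bv3j4} requires \(j\leq\ff(n)\) to control \(\KK_j\).

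You should know that the paper's own proof makes exactly the same unjustified jump: it writes \(\qq^{\frac{n}{\alpha_{\lfloor\ff(n)\rfloor}}}\leq\qq^{\frac{n}{(2\lambda)^{\ff(n)-1}}}\) with no comment. So you have correctly located a weak spot rather than missed an idea the author supplies. The result is easily salvaged --- either weaken \(\BB(n)\) to \(\qq^{\frac{n}{\phi(n)}+\frac{n}{(2\lambda)^{\ff(n)-2}}}\) (the extra factor \(2\lambda\) in the exponent is harmless for both claims, and your limit computation goes through verbatim), or shrink the constant \(\cc\) and enlarge \(n_0\) so that \(\ff(n)\leq\lfloor\sqrt[\gamma]{\cc_{\mathrm{old}}\ln^*(\frac{n}{\phi(n)}\ln\qq)}\rfloor\) for \(n>n_0\) --- but as written neither your argument nor the paper's actually lands on the exponent \(\ff(n)-1\). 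The second claim (\(\lim_{n\to\infty}\sqrt[n]{\BB(n)}=1\)) you prove correctly and in the only reasonable way.
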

\begin{proof}
From Lemma \ref{hgd79bv3j4} we have that if \(n\in\mathbb{N}_1\) then \(\KK_{\lfloor\ff(n)\rfloor}\leq \qq^{\frac{n}{\phi(n)}}\).
It is clear that \(\ff\in\Delta\). Hence  Corollary \ref{jjduf9223v} implies that 
\[\begin{split}\RR(n)\leq \KK_{\lfloor\ff(n)\rfloor}\qq^{\frac{n}{\alpha_{\lfloor\ff(n)\rfloor}}}\leq \qq^{\frac{n}{\phi(n)}}\qq^{\frac{n}{\alpha_{\lfloor\ff(n)\rfloor}}}\leq \qq^{\frac{n}{\phi(n)}}\qq^{\frac{n}{(2\lambda)^{\ff(n)-1}}}
\mbox{.}\end{split}\]
The theorem follows. 
This ends the proof.
\end{proof}

\bibliographystyle{siam}
\IfFileExists{biblio.bib}{\bibliography{biblio}}

@article{RukavickaRichWords2017,
author = {Josef Rukavicka},
title="On the Number of Rich Words",
journal={Developments in Language Theory: 21st International Conference, DLT 2017, Li{\`e}ge, Belgium, August 7-11, 2017, Proceedings, Springer International Publishing, ISBN:978-3-319-62809-7, available at https://doi.org/10.1007/978-3-319-62809-7\_26},
year="2017",
pages="345--352"
}

@article{PELANTOVA20132432,
title = {Languages invariant under more symmetries: Overlapping factors versus palindromic richness},
journal = {Discrete Mathematics},
volume = {313},
number = {21},
pages = {2432-2445},
year = {2013},
issn = {0012-365X},
doi = {https://doi.org/10.1016/j.disc.2013.07.002},
url = {https://www.sciencedirect.com/science/article/pii/S0012365X13003014},
author = {Edita Pelantová and Štěpán Starosta},
keywords = {Palindromic richness, Overlaps, Symmetries, Thue–Morse word, Group palindromic richness}
}

@article{VESTI201748,
title = {Rich square-free words},
journal = {Theoretical Computer Science},
volume = {687},
pages = {48-61},
year = {2017},
issn = {0304-3975},
doi = {https://doi.org/10.1016/j.tcs.2017.05.003},
url = {https://www.sciencedirect.com/science/article/pii/S0304397517303651},
author = {Jetro Vesti},
keywords = {Combinatorics on words, Palindromes, Rich words, Square-free words, Repetition threshold}
}

@InProceedings{10.1007/978-3-030-28796-2_23,
author = {Josef Rukavicka},
editor="Merca{\c{s}}, Robert
and Reidenbach, Daniel",
title="Rich Words Containing Two Given Factors",
booktitle="Combinatorics on Words",
year="2019",
publisher="Springer International Publishing",
address="Cham",
pages="286--298",
isbn="978-3-030-28796-2"
}

@InProceedings{10.1007/978-3-319-66396-8_7,
author="Pelantov{\'a}, Edita
and Starosta, {\v{S}}t{\v{e}}p{\'a}n",
editor="Brlek, Sre{\v{c}}ko
and Dolce, Francesco
and Reutenauer, Christophe
and Vandomme, {\'E}lise",
title="On Words with the Zero Palindromic Defect",
booktitle="Combinatorics on Words",
year="2017",
publisher="Springer International Publishing",
address="Cham",
pages="59--71",
isbn="978-3-319-66396-8"
}

@article{GuShSh15,
title = "Palindromic rich words and run-length encodings ",
journal = "Inform. Process. Lett.",
volume = "116",
number = "12",
pages = "735--738",
year = "2016",
issn = "0020-0190",
doi = "http://dx.doi.org/10.1016/j.ipl.2016.07.001",
url = "http://www.sciencedirect.com/science/article/pii/S0020019016300990",
author = "Chuan Guo and Jeffrey Shallit and Arseny M. Shur",
keywords = "Combinatorial problems",
keywords = "Palindrome",
keywords = "Rich word",
keywords = "Growth function",
keywords = "Integer partition "
}

@article{BuLuGlZa2,
  author    = {Michelangelo Bucci and
               Alessandro {De Luca} and
               Amy Glen and
               Luca Q. Zamboni},
  title     = {A new characteristic property of rich words},
  journal   = {Theor. Comput. Sci.},
  volume    = {410},
  number    = {30-32},
  year      = {2009},
  pages     = {2860-2863},
  ee        = {http://dx.doi.org/10.1016/j.tcs.2008.11.001},
  bibsource = {DBLP, http://dblp.uni-trier.de}
}

@article{GlJuWiZa,
  author    = {Amy Glen and
               Jacques Justin and
               Steve Widmer and
               Luca Q. Zamboni},
  title     = {Palindromic richness},
  journal   = {Eur. J. Combin.},
  volume    = {30},
  number    = {2},
  year      = {2009},
  pages     = {510-531},
  doi        = {10.1016/j.ejc.2008.04.006},
  bibsource = {DBLP, http://dblp.uni-trier.de}
}

@article{RUKAVICKA2021richext,
title = {A unique extension of rich words},
journal = {Theoretical Computer Science},
volume = {896},
pages = {53-64},
year = {2021},
issn = {0304-3975},
doi = {https://doi.org/10.1016/j.tcs.2021.10.004},
url = {https://www.sciencedirect.com/science/article/pii/S0304397521005971},
author = {Josef Rukavicka},
keywords = {Palindromes, Rich words, Extension of words}
}

@article{RukavickaFacCmplx2021,
	author = {Josef Rukavicka},
	title = {Upper bound for palindromic and factor complexity of rich words},
	DOI= "10.1051/ita/2020008",
	url= "https://doi.org/10.1051/ita/2020008",
	journal = {RAIRO-Theor. Inf. Appl.},	
	volume = {55 (2021) Article No. 1}
}

@article{RUKAVICKA202295,
title = {Palindromic factorization of rich words},
journal = {Discrete Applied Mathematics},
volume = {316},
pages = {95-102},
year = {2022},
issn = {0166-218X},
doi = {https://doi.org/10.1016/j.dam.2022.03.030},
url = {https://www.sciencedirect.com/science/article/pii/S0166218X22001159},
author = {Josef Rukavicka},
keywords = {Rich words, Palindromic factorization, Palindromic length}
}

@article{AGO2021184,
title = {On generalized highly potential words},
journal = {Theoretical Computer Science},
volume = {849},
pages = {184-196},
year = {2021},
issn = {0304-3975},
doi = {https://doi.org/10.1016/j.tcs.2020.10.022},
url = {https://www.sciencedirect.com/science/article/pii/S0304397520306058},
author = {Kristina Ago and Bojan Bašić and Stefan Hačko and Danijela Mitrović},
keywords = {Palindrome, Palindromic defect, Word defect, Full word, Rich word}
}

@article{Vesti2014,
title = "Extensions of rich words ",
journal = "Theor. Comput. Sci.",
volume = "548",
number = "",
pages = "14--24",
year = "2014",
note = "",
issn = "0304-3975",
doi = "http://dx.doi.org/10.1016/j.tcs.2014.06.033",
author = "Jetro Vesti",
keywords = "Combinatorics on words",
keywords = "Palindromes",
keywords = "Rich words",
keywords = "Sturmian words",
keywords = "Defect",
keywords = "Two-dimensional words "
}

@article{FrPuZa,
title = "On palindromic factorization of words",
journal = "Adv. Appl. Math.",
volume = "50",
number = "5",
pages = "737--748",
year = "2013",
note = "",
issn = "0196-8858",
doi = "http://dx.doi.org/10.1016/j.aam.2013.01.002",
url = "http://www.sciencedirect.com/science/article/pii/S0196885813000134",
author = "A.E. Frid and S. Puzynina and L.Q. Zamboni",
keywords = "Palindrome",
keywords = "Periodicity of words",
keywords = "Complexity of words"
}

\end{document}